\documentclass[fleqn, 11pt, authoryear]{elsarticle}
\biboptions{sort}

\usepackage{hyperref}

\journal{{\color{black}EJOR}}

\usepackage{pifont}

\makeatletter
\def\ps@pprintTitle{%
  \let\@oddhead\@empty
  \let\@evenhead\@empty
  \let\@oddfoot\@empty
  \let\@evenfoot\@oddfoot
}
\makeatother
\usepackage[usenames,dvipsnames]{color}

\usepackage[utf8]{inputenc}
\usepackage[english]{babel}
\usepackage{amsthm}
\usepackage{makecell}
\usepackage{thmtools}
\usepackage{thm-restate}
\usepackage{tabularx}
\usepackage{adjustbox}
\usepackage{hyperref}
\usepackage{xurl}
\usepackage{blindtext}
\usepackage{amsmath,amssymb,amsfonts}
\usepackage{bbm}

\usepackage{cleveref}
\usepackage{fancyhdr}
\usepackage[table]{xcolor}
\usepackage{tikz}
\usetikzlibrary{shapes.geometric, shapes.symbols, positioning}
\usetikzlibrary{backgrounds}
\usetikzlibrary{arrows.meta, fit}
\usetikzlibrary{matrix}
\usepackage{pgfplots}
\usepackage{pgfplotstable}
\usepackage{pgfplots}\usetikzlibrary{patterns}
\usepgfplotslibrary{groupplots}
\usepackage{sankey}
\usepackage{siunitx}

\usepackage{relsize}

\usepackage[short]{optidef}
\usepackage{float}
\graphicspath{{figures/}}
\usepackage{amssymb}
\usepackage{color}
\usepackage{mathtools}
\usepackage{subcaption}
\usepackage{changepage}
\usepackage{geometry}
\usepackage{lipsum}
\usepackage{algorithm}
\usepackage[noend]{algpseudocode}

\algrenewcommand\algorithmicrequire{\textbf{Input:}}
\algrenewcommand\algorithmicensure{\textbf{Output:}}

\usepackage{multirow}
\usepackage{longtable}
\usepackage{supertabular}

\usetikzlibrary{positioning}
\usepgfplotslibrary{dateplot}
\pgfplotsset{compat=1.18}

\usepackage{graphics} 
\usepackage{setspace} 
\usepackage{graphicx} 
\usepackage{lscape}
\usepackage{makeidx} 
\usepackage{pdflscape} 
\usepackage{amsmath,amssymb,amsfonts} 
\usepackage{nccmath}
\usepackage{newtxtext,newtxmath}
\DeclareMathAlphabet{\mathcal}{OMS}{cmsy}{m}{n}
\SetMathAlphabet{\mathcal}{bold}{OMS}{cmsy}{b}{n}
\usepackage{type1cm}
\usepackage{anyfontsize}
\usepackage[nopatch=footnote]{microtype}
\usepackage[final]{pdfpages} 
\usepackage{calc} 
\usepackage{fancyhdr} 
\usepackage{multicol} 
\usepackage[T1]{fontenc}
\usepackage[utf8]{inputenc} 
\usepackage[english]{babel}
\usepackage{threeparttable, supertabular} 
\usepackage{endnotes} 
\usepackage{rotating, rotfloat} 
\usepackage{fullpage} 
\usepackage[usenames,dvipsnames]{color} 
\usepackage{colortbl} 
\usepackage{eurosym}
\usepackage[bottom, flushmargin]{footmisc} 
\usepackage{booktabs} 
\usepackage{tabularx}
\usepackage{array}
\usepackage{mathrsfs}
\usepackage[format=plain, textfont=normal, justification=justified, singlelinecheck=false, font=normal]{caption}
\usepackage{enumitem}
\usepackage{diagbox}
\usepackage{framed}

\allowdisplaybreaks

\newtheorem{proposition}{Proposition}

\newcommand{\doi}[1]{\href{https://doi.org/#1}{\path{https://doi.org/#1}}}

\makeatletter
\renewcommand\section{\@startsection{section}{1}{\z@}%
           {8\p@ \@plus 2\p@ \@minus 2\p@}%
           {4\p@ \@plus 2\p@ \@minus 2\p@}%
           {\normalsize\bfseries\boldmath}}
\renewcommand\subsection{\@startsection{subsection}{2}{\z@}%
           {6\p@ \@plus 2\p@ \@minus 2\p@}%
           {2\p@ \@plus 2\p@ \@minus 2\p@}%
           {\normalfont\normalsize\itshape}}
\renewcommand\subsubsection{\@startsection{subsubsection}{3}{\z@}%
           {6\p@ \@plus 2\p@ \@minus 2\p@}%
           {\p@}%
           {\normalfont\normalsize\itshape}}
\renewcommand\elsparagraph{\@startsection{paragraph}{4}{0\z@}%
           {6\p@ \@plus 2\p@ \@minus 2\p@}%
           {-6\p@}%
           {\normalfont\itshape}}
\g@addto@macro\normalsize{%
  \setlength\abovedisplayskip{6pt plus 2pt minus 2pt}%
  \setlength\belowdisplayskip{6pt plus 2pt minus 2pt}%
  \setlength\abovedisplayshortskip{3pt plus 1pt}%
  \setlength\belowdisplayshortskip{3pt plus 1pt}%
}
\makeatother

\begin{document}

\begin{frontmatter}

\title{Benders Decomposition with Partial Non-Anticipativity Relaxation for Multi-Stage Stochastic Clean Energy Transition Planning}

\author[1]{Ahmet Emir \c{S}ener}

\ead{ahmet.sener@std.bogazici.edu.tr}

\address[1]{Department of Industrial Engineering, Bo\u{g}azi\c{c}i University, Istanbul, Turkey}

\author[2]{Burak Kocuk\corref{cor1}}
\ead{burakkocuk@sabanciuniv.edu}

\author[2]{Tu\u{g}\c{c}e Y\"{u}ksel}
\ead{tugce.yuksel@sabanciuniv.edu}

\address[2]{Faculty of Engineering and Natural Sciences, Sabanc{\i} University, Istanbul, Turkey}
    \cortext[cor1]{Corresponding author. }

\begin{abstract}

\noindent
We study clean energy transition planning for campus-scale integrated electricity-heat systems under both strategic level and operational level uncertainties. We formulate a multi-stage stochastic mixed-integer program that jointly optimizes investment and operational decisions for renewable generation, storage, and heat-transfer technologies whose costs and efficiencies evolve stochastically across stages. To account for short-term operational uncertainty, we further derive a robust reformulation based on box uncertainty sets for demand, renewable generation, and heat-transfer performance. To solve the resulting large-scale model, we develop a Benders decomposition algorithm with partial non-anticipativity relaxation. Investment variables and their non-anticipativity constraints are retained in the master problem while operational variables are assigned to scenario-wise subproblems where their non-anticipativity constraints are relaxed. Non-anticipativity of operational variables is restored only at termination through a smaller linear program. We prove that this correction step can only increase the objective function value by a finite bound. We improve the computational performance of the  algorithm  developed with valid inequalities and a two-phase cut-addition strategy. Using the proposed approach, we solve the Middle East Technical University campus case study at high temporal resolution within a reasonable computational budget, which is not otherwise possible with the extensive form or the classical Benders decomposition. We evaluate the resulting investment plans through rolling-horizon Monte Carlo simulations with an out-of-sample analysis and demonstrate that the added robustness can significantly improve the operational reliability of the transition plans with a moderate increase in total cost. 


\end{abstract}

\begin{keyword}
\texttt{OR in energy, Clean energy transition, Multi-stage stochastic programming, Benders decomposition, Non-anticipativity relaxation}
\end{keyword}

\end{frontmatter}

\newcommand{\smallplus}{\mathrel{\scalebox{0.8}[0.8]{+}}}
\newcommand{\smallminus}{\mathrel{\scalebox{0.9}[0.9]{-}}}
\newcommand{\smalleq}{\mathrel{\scalebox{0.8}[0.8]{=}}}
\newcommand{\smallge}{\mathrel{\scalebox{0.9}[0.9]{$\ge$}}}
\newcommand{\smallle}{\mathrel{\scalebox{0.8}[0.8]{$\le$}}}
\newcommand{\ssetminussobj}{\mathbin{\tikz[baseline={(0,-0.14ex)}] \draw[line width=0.27pt] (0,0.18) -- (0.06,-0.03);}}
\newcommand{\ssetminusseq}{\mathbin{\tikz[baseline={(0,-0.14ex)}] \draw[line width=0.27pt] (0,0.24) -- (0.07,-0.03);}}
\newcommand{\zeroobj}{\tikz[baseline={(0,-0.14ex)}] \node[inner sep=0pt, anchor=base, scale=0.9] {\scriptsize\{0\}};}
\newcommand{\zeroeq}{\tikz[baseline={(0,-0.14ex)}] \node[inner sep=0pt, anchor=base, scale=1.18] {\scriptsize\{0\}};}
\newcommand{\smalltimes}{\mathbin{\mathsmaller{\times}}}
\newcommand{\scriptcompact}[1]{{\scriptsize\spaceskip=.12em\xspaceskip=.2em #1}}

\section{Introduction}

Global sustainability goals and net-zero commitments are intensifying the need to decarbonize large-scale, energy-intensive facilities. Planning a clean energy transition for such facilities is difficult, as it requires transforming capital-intensive infrastructure while maintaining a reliable and continuous energy supply~\citep{davis2018net}. Long-term emission targets make the timing and mix of investments critical, yet these decisions must be made under uncertainty about how technology costs and efficiencies will evolve. These investments must also support energy management at a high temporal resolution, which is itself subject to operational uncertainties such as fluctuating demand and variable technology performance. In this paper, we study the clean energy transition planning problem of a campus-scale integrated electricity and heat system under uncertainty at both the strategic and operational levels.

Campus-scale clean energy transition planning is increasingly framed as an integrated infrastructure design problem that coordinates distributed generation, storage, and energy conversion technologies within interconnected multi-energy systems. Existing studies have examined carbon-neutral transitions, microgrid design, renewable energy and storage integration, and multi-energy system configurations~\citep{Mavromatidis2021,Tian2022,VERGARAZAMBRANO2025101996}, typically combining investment decisions with operational feasibility and infrastructure constraints. Many of these studies, however, rely on deterministic assumptions, which limits their ability to assess how uncertainty affects long-term system performance. Studies that do incorporate stochasticity tend to focus on energy prices, demand growth, renewable resource availability, and policy changes~\citep{Guevara2022,FANG2024123961,AZIMIAN2025126593}. Technology-driven cost reductions and efficiency improvements, by contrast, are treated only implicitly, despite their substantial influence on the timing and portfolio mix of renewable investment decisions. In addition, many of these studies adopt static planning formulations or rely heavily on representative days~\citep{rathi2022,HUYLO2025112736,bacci2025economic} and aggregated temporal resolutions~\citep{Ioannou2019,Guevara2022} to maintain computational tractability. Such simplifications can obscure the chronological dependencies needed to accurately capture storage dynamics, supply--demand balancing, and system feasibility, particularly under high renewable penetration~\citep{gao2023,Marcy2022,CHEN2025117950}.

Solving models that jointly incorporate uncertainty and high-resolution operations is computationally challenging, because long-term investment decisions must remain feasible across many scenario paths. To address this difficulty, the literature offers a range of solution approaches for multi-stage stochastic mixed-integer programs. The extensive-form deterministic equivalent serves as a standard benchmark by reformulating the entire scenario tree into a single deterministic optimization problem~\citep{birge1997introduction}. This approach, however, quickly becomes computationally prohibitive as the number of stages, scenarios, and operational periods grows. To mitigate this burden, sampling-based methods, such as scenario reduction~\citep{dupavcova2003scenario}, sample average approximation~\citep{kleywegt2002sample}, and stochastic decomposition~\citep{higle1991stochastic}, reduce or approximate the scenario representation. Nevertheless, they primarily address the size of the uncertainty model and leave unresolved the structural difficulties arising from integrality, interstage coupling, and non-anticipativity.

A complementary line of research instead exploits the problem structure through decomposition. Scenario decomposition~\citep{mulvey1995new} and progressive hedging~\citep{rockafellar1991scenarios} decompose the problem along scenario paths and coordinate the resulting subproblems by penalizing violations of non-anticipativity, while nested Benders decomposition~\citep{birge1985decomposition} recursively splits the scenario tree into stagewise subproblems and approximates downstream recourse functions through Benders cuts. The application of these methods to problems with integer variables, however, often requires additional mechanisms such as outer branch-and-bound schemes or integer-valid cutting planes. Building on this line of work, stochastic dual dynamic programming~\citep{pereira1991multi} approximates future cost-to-go functions through Benders cuts generated iteratively over forward and backward passes of the scenario tree. Stochastic dual dynamic integer programming~\citep{zou2019stochastic}, together with its recent extensions~\citep{fullner2024new}, adapts this framework to mixed-integer programs by employing integer-valid cuts such as Lagrangian cuts. These methods, however, typically rely on the stagewise independence of the underlying stochastic process, an assumption that does not hold in the setting considered in this paper, where technology costs and efficiencies evolve recursively across stages through advancement multipliers.

Motivated by the modeling gaps and methodological limitations identified above, we formulate a multi-stage stochastic programming (MSSP) model that jointly optimizes investment and operational decisions for renewable generation, storage, and heat-transfer technologies at a high temporal resolution. To capture strategic-level uncertainties in technology costs and efficiencies, we construct a scenario tree that represents the sequential revelation of technological progress over the planning horizon. We further introduce a robust reformulation based on box uncertainty sets to account for short-term operational uncertainties.

To solve the resulting large-scale model, we develop a Benders decomposition algorithm with partial non-anticipativity relaxation, grounded in scenario-based decomposition. The decision variables are split between the master problem and the subproblems according to the difficulty of restoring their associated non-anticipativity constraints. Specifically, investment variables and their non-anticipativity constraints are retained in the master problem, while operational variables are assigned to scenario-wise subproblems in which the corresponding non-anticipativity constraints are temporarily relaxed. Rather than progressively correcting non-anticipativity violations within each Benders iteration, we restore the non-anticipativity of operational decisions at the end of the algorithm by solving a smaller linear program (LP). Owing to the structural properties of the proposed model, this correction step is proven to recover a feasible solution with a bounded increase in the objective function value.

This paper makes three main contributions. (i) We formulate an MSSP model for the clean energy transition of a campus-scale integrated electricity--heat system under uncertain technological progress, and complement it with a robust reformulation that captures operational uncertainty. (ii) We develop a Benders decomposition algorithm with partial non-anticipativity relaxation with a correction step. We further strengthen the algorithm with valid inequalities and a two-phase cut-addition strategy. (iii) We evaluate the nominal and robust investment plans  through rolling-horizon Monte Carlo simulations in an out-of-sample analysis over a broader range of operational realizations. Together, these developments enable us to solve a clean electricity--heat transition case study for the Middle East Technical University (METU) campus in Ankara, Turkey, comprising 9{,}435{,}001 constraints and 10{,}352{,}904 decision variables. In comparison, our earlier work~\citep{csener2025dynamic}, which did not employ the proposed method, could only address the clean electricity transition under uncertain technological advancement without operational uncertainty considerations.

The rest of the paper is organized as follows. Section~\ref{sec:modelling-framework} presents the MSSP model, its robust reformulation, and valid inequalities, while Section~\ref{sec:uncertainty-modeling} characterizes the technological and operational uncertainties they address. Section~\ref{sec:methodology-benders} develops the proposed Benders decomposition algorithm and the out-of-sample evaluation procedure. Section~\ref{sec:computations} reports the computational experiments and the campus-scale case study. Finally, Section~\ref{sec:conclusion-main-findings} concludes the paper.

\section{Optimization Model}
\label{sec:modelling-framework}

\begin{table}[H]
\caption{\small Nomenclature}
\label{tab:nomenclature}
\vspace{-12pt}
\renewcommand{\arraystretch}{1.1}
\begin{framed}
\scriptsize
\noindent
\begin{minipage}[t]{0.48\textwidth}\vspace{0pt}
\noindent\textit{Sets} \\[1ex]
\begin{tabularx}{\linewidth}{@{}l@{\hspace{4pt}}|@{\hspace{4pt}}X@{}}
    $\Psi$ & Set of stages. \\
    $\mathcal{N}$ & Set of nodes in the scenario tree, $\mathcal{N} = \{0, \ldots, N\}$. \\
    $\mathcal{N}_{l}$ & Set of leaf nodes in the scenario tree. \\
    $\mathcal{T}$ & Set of investment periods, $\mathcal{T} = \{0, \ldots, T\}$. \\
    $\mathcal{T}_n$ & Subset of investment periods associated with node $n \in \mathcal{N}$. \\
    $\mathcal{Q}$ & Set of operational sub-periods within each investment period, $\mathcal{Q} = \{1, \ldots, Q\}$. \\
    $\mathcal{R}$ & Set of energy categories, $\mathcal{R} = \{\text{electricity},\ \text{heat}\}$. \\
    $\mathcal{J}$ & Set of energy generation technologies. \\
    $\mathcal{S}$ & Set of energy storage technologies. \\
    $\mathcal{A}$ & Set of heat transfer technologies. \\
    $\mathcal{U}$ & Set of all technologies, $\mathcal{U} = \mathcal{J} \cup \mathcal{S} \cup \mathcal{A}$. \\
    $\mathcal{T}_{u,[t)}$ & Set of periods $t'$ during which technology $u \in \mathcal{U}$, installed in period $t \in \mathcal{T}$, remains operational, i.e., $t \leq t' < t + \tau_{(u,t)}$. \\
    $\mathcal{T}'_{u,[t')}$ & Set of possible installation periods $t$ of technology $u \in \mathcal{U}$ that is operational in period $t' \in \mathcal{T}$, i.e., $t \leq t' < t + \tau_{(u,t)}$. \\
\end{tabularx}

\vspace{0.1cm}
\noindent\textit{Decision variables} \\[1ex]
\begin{tabularx}{\linewidth}{@{}l@{\hspace{4pt}}|@{\hspace{4pt}}X@{}}
    $x_{u,t,n}$ & Number of installations of technology  $u \in \mathcal{U}$ in period $t \in \mathcal{T}_n$ at node $n \in \mathcal{N}$. \\
    $h_{r,q,t,n}$ & Energy $r \in \mathcal{R}$ stored at the end of sub-period $q \in \mathcal{Q}$ in period $t \in \mathcal{T}_n$ at node $n \in \mathcal{N}$, carried forward to the next sub-period. \\
    $h_{r,q,t,n}^{\smallplus}$ & Energy $r \in \mathcal{R}$ charged into storage during sub-period $q \in \mathcal{Q}$ in period $t \in \mathcal{T}_n$ at node $n \in \mathcal{N}$. \\
    $h_{r,q,t,n}^{\smallminus}$ & Energy $r \in \mathcal{R}$ discharged from storage during sub-period $q \in \mathcal{Q}$ in period $t \in \mathcal{T}_n$ at node $n \in \mathcal{N}$. \\
    $p_{r,q,t,n}$ & Energy $r \in \mathcal{R}$ purchased exogenously in sub-period $q \in \mathcal{Q}$ in period $t \in \mathcal{T}_n$ at node $n \in \mathcal{N}$. \\
    $o_{r,q,t,n}$ & Energy $r \in \mathcal{R}$ used from inventory and generation in sub-period $q \in \mathcal{Q}$ of period $t \in \mathcal{T}_n$ at node $n \in \mathcal{N}$. \\
    $m_{a,q,t,t',n}$ & Heat transferred in sub-period $q \in \mathcal{Q}$ in period $t \in \mathcal{T}_n$ at node $n \in \mathcal{N}$ by technology $a \in \mathcal{A}$ that was installed in period $t \in \mathcal{T}'_{a,[t')}$. \\
\end{tabularx}

\vspace{0.1cm}
\noindent\textit{Parameters} \\[1ex]
\begin{tabularx}{\linewidth}{@{}l@{\hspace{4pt}}|@{\hspace{4pt}}X@{}}
    $\Gamma_{j,r,q,t,t',n}$ & Energy $r \in \mathcal{R}$ generation of technology $j \in \mathcal{J}$, in sub-period $q \in \mathcal{Q}$ of period $t' \in \mathcal{T}_n$ at node $n \in \mathcal{N}$, for installations made in period $t \in \mathcal{T}'_{j,[t')}$. \\
    $\beta_{\textnormal{nominal}}$ & Nominal discount rate applied to the periods. \\
    $\zeta$ & Inflation rate applied to the periods.
\end{tabularx}

\end{minipage}
\hfill\vrule\hfill
\begin{minipage}[t]{0.48\textwidth}\vspace{0pt}
\begin{tabularx}{\linewidth}{@{}l@{\hspace{4pt}}|@{\hspace{4pt}}X@{}}
    $\beta_{\textnormal{real}}$ & Real discount rate, calculated as $\beta_{\textnormal{real}} = \frac{1 + \beta_{\textnormal{nominal}}}{1 + \zeta} - 1$. \\
    $\beta$ & Discount factor, calculated as $\beta = \frac{1}{1 + \beta_{\textnormal{real}}}$. \\
    $\pi_n$ & Probability of node $n \in \mathcal{N}$ in the scenario tree. \\
    $\psi_n$ & Stage that node $n \in \mathcal{N}$ belongs to. \\
    $\mu_{n,t}$ & Ancestor node of $n \in \mathcal{N}$ in period $t \in \mathcal{T}$. \\
    $\kappa_{q,t,n}$ & Sub-period, period, and node immediately preceding sub-period $q \in \mathcal{Q}$ in period $t \in \mathcal{T}_n$ at node $n \in \mathcal{N}$. \\
    $\hat{\kappa}_{q,t,n}$ & Period immediately preceding sub-period $q \in \mathcal{Q}$ in period $t \in \mathcal{T}_n$ at node $n \in \mathcal{N}$. \\
    $\alpha_{u,n}$ & Installation cost of technology $u \in \mathcal{U}$ at node $n \in \mathcal{N}$. \\
    $\hat{\alpha}_{u,t}$ & Total O\&M cost over the remaining planning horizon for technology $u \in \mathcal{U}$ installed in period $t \in \mathcal{T}$. \\
    $\lambda_{r,t}$ & Cost of purchasing energy $r \in \mathcal{R}$ exogenously in period $t \in \mathcal{T}$. \\
    $\delta_{r,q,t}$ & Energy $r \in \mathcal{R}$ demand in sub-period $q \in \mathcal{Q}$ of period $t \in \mathcal{T}$. \\
    $\upsilon_{s,r,t,t'}$ & Energy $r \in \mathcal{R}$ storage capacity of technology $s \in \mathcal{S}$, during period $t' \in \mathcal{T}$ for installations made in period $t \in \mathcal{T}'_{s,[t')}$. \\
    $\varphi_{a}$ & Heat transfer capacity of technology $a \in \mathcal{A}$.\\
    $\sigma_{u,t,t'}$ & Degradation rate of technology $u \in \mathcal{U}$, in period $t' \in \mathcal{T}$, for units installed in period $t \in \mathcal{T}'_{u,[t')}$. \\
    $\eta^{\smallplus}_{r}, \eta^{\smallminus}_{r}$ & Charging and discharging efficiencies of energy $r \in \mathcal{R}$ storage technologies, respectively. \\
    $\eta_{r}$ & Self discharge rate of energy $r \in \mathcal{R}$ storage technologies. \\
    $\mathrm{cop}_{a,q,t,n}$ & Coefficient of performance in sub-period $q \in \mathcal{Q}$ for technology $a \in \mathcal{A}$ installed in period $t \in \mathcal{T}_n$ at node $n \in \mathcal{N}$. \\
    $\rho_{u,t}$ & Spatial requirement of technology $u \in \mathcal{U}$, in period $t \in \mathcal{T}$. \\
    $\tau_{u,t}$ & Economic lifetime of technology $u \in \mathcal{U}$ installed in period $t \in \mathcal{T}$. \\
    $\iota_{u}$ & Number of technology $u \in \mathcal{U}$ existing at the beginning of the planning horizon. \\
    $\gamma_{r}$ & Maximum fraction of annual demand for $r \in \mathcal{R}$ that can be met by emission-causing exogenous purchases in the final period. \\
    $\phi_t$ & Available budget for installations in period $t \in \mathcal{T}$. \\
    $\varkappa_{t}$ & Cumulative maximum installation area available up to period $t \in \mathcal{T}$.
\end{tabularx}
\end{minipage}
\end{framed}
\end{table}

\subsection{Multi-Stage Stochastic Programming Model}
\label{subsec:mssp-model}

The deterministic equivalent of the proposed MSSP model is formulated using the notation defined in Table~\ref{tab:nomenclature}. Technology costs and efficiency-related performance profiles, including renewable energy generation and coefficients of performance, are indexed by the nodes in the scenario-tree $\mathcal{N}$ to capture their stage-wise realization under uncertainty. Each node $n \in \mathcal{N}$ comprises a set of periods $\mathcal{T}_n$, in which long-term technology investment decisions are made, and each period is further divided into sub-periods $\mathcal{Q}$, in which short-term operational decisions are made to meet energy demand. The resulting mixed-integer linear program given below jointly optimizes these investment and operational decisions for clean energy transition planning in an integrated electricity--heat system.

{\small
\begin{subequations}\label{eq:stochasticModel}
\begin{flalign}
    & \text{minimize} \ \sum_{n \in \mathcal{N}\ssetminussobj \zeroobj} \sum_{t \in \mathcal{T}_n} \pi_n \beta^{(t-1)} \left( \sum_{u \in \mathcal{U}} (\alpha_{u,n} \smallplus \hat{\alpha}_{u,t}) \ x_{u,t,n} \smallplus \sum_{r \in \mathcal{R}} \sum_{q \in \mathcal{Q}} \lambda_{r,t} \ p_{r,q,t,n} \right) \label{eq:objFunc} 
\end{flalign}
\vspace{-0.038\textwidth}
\begin{flalign}
    & \text{subject to } \notag \\
    & o_{r,q,t',n} \smallplus  \sum_{a \in \mathcal{A}} \sum_{t \in \mathcal{T}'_{a,[t')}} \mathbbm{1}_{a,r,q,t,t',n} \ m_{a,q,t,t',n} \smallge \mathit{\delta}_{r,q,t'}
    & n \in \mathcal{N} \ssetminusseq \zeroeq, t' \in \mathcal{T}_n, q \in \mathcal{Q}, r \in \mathcal{R} \label{eq:demandConstr} \\
    & p_{r,q,t',n} \smallminus h_{r,q,t',n}^{\smallplus} \smallplus h_{r,q,t',n}^{\smallminus} \smallplus \sum_{j \in \mathcal{J}} \sum_{t \in \mathcal{T}'_{j,[t')}} \Gamma_{j,r,q,t,t',n} \ x_{j,t,\mu_{t,n}} \smallge o_{r,q,t',n}
    & n \in \mathcal{N} \ssetminusseq \zeroeq, t' \in \mathcal{T}_n, q \in \mathcal{Q}, r \in \mathcal{R} \label{eq:demandConstr2}
    \end{flalign}
    \vspace{-0.05\textwidth}
    \begin{flalign}
    & h_{r,q,t',n} = \eta_{r} h_{r,\kappa_{q,t',n}} \smallplus \eta^{\smallplus}_{r} h_{r,q,t',n}^{\smallplus} \smallminus \frac{h_{r,q,t',n}^{\smallminus}}{\eta^{\smallminus}_{r}}
    & n \in \mathcal{N} \ssetminusseq \zeroeq, t' \in \mathcal{T}_n, q \in \mathcal{Q}, r \in \mathcal{R} \label{eq:storagebalanceConstr} \\
    & h_{r,q,t',n} \leq \sum_{s \in \mathcal{S}} \sum_{t \in \mathcal{T}'_{s,[t')}} \upsilon_{s,r,t,t'} \ x_{s,t,\mu_{t,n}}
    & n \in \mathcal{N}, t' \in \mathcal{T}_n, q \in \mathcal{Q}, r \in \mathcal{R} \label{eq:storagecapacityConstr} \\
    & m_{a,q,t,t',n} \leq \varphi_{a} \ x_{a,t,\mu_{t,n}}
    & \quad \ \ n \in \mathcal{N}, t' \in \mathcal{T}_n, a \in \mathcal{A}, t \in \mathcal{T}'_{a,[t')}, q \in \mathcal{Q} \label{eq:heattransferConstr} \\
    & \sum_{q \in \mathcal{Q}} p_{r,q,T,n} \leq \gamma_{r} \sum_{q \in \mathcal{Q}} \mathit{\delta}_{r,q,T}
    & n \in \mathcal{N}_{l}, r \in \mathcal{R} \label{eq:emissionConstr} \\
    & \sum_{u \in \mathcal{U}} \alpha_{u,n} \ x_{u,t,n} \leq \phi_{t}
    & n \in \mathcal{N} \ssetminusseq \zeroeq, t \in \mathcal{T}_n \label{eq:budgetConstr} \\
    & \sum_{u \in \mathcal{U}} \sum_{t \in \mathcal{T}'_{u,[t')}} \rho_{u,t} \ x_{u,t,\mu_{t,n}} \leq \varkappa_{t'}
    & n \in \mathcal{N} \ssetminusseq \zeroeq, t' \in \mathcal{T}_n \label{eq:spatialConstr} \\[-0.45em]
    & x_{u,0,0} = \iota_{u}
    & u \in \mathcal{U} \label{eq:initializationConstr} \\
    & \left\lfloor \frac{\phi_t}{\alpha_{u,n}} \right\rfloor \smallge x_{u,t,n}, \quad x_{u,t,n} \in \mathbb{Z}_+
    & n \in \mathcal{N}, t \in \mathcal{T}_n, u \in \mathcal{U} \label{eq:VarDomain1}\\
    & m_{a,q,t,t',n} \in \mathbb{R}_+ 
    & n \in \mathcal{N}, t' \in \mathcal{T}_n, t \in \mathcal{T}'_{a,[t')}, q \in \mathcal{Q}, a \in \mathcal{A} \label{eq:VarDomain2} \\
    & p_{r,q,t,n}, \ h_{r,q,t,n}, \ h_{r,q,t,n}^{\smallplus}, \ h_{r,q,t,n}^{\smallminus}, \ o_{r,q,t,n} \in \mathbb{R}_+ 
    & n \in \mathcal{N}, t \in \mathcal{T}_n, q \in \mathcal{Q}, r \in \mathcal{R}.\label{eq:VarDomain3}
\end{flalign}
\end{subequations}
}

The objective function~\eqref{eq:objFunc} minimizes the expected discounted total cost over the planning horizon. The cost components include technology installation costs, operation and maintenance (O\&M) costs, and exogenous energy procurement costs.
In each sub-period, energy demand must be satisfied through a combination of on-site generation, exogenous energy procurement, storage discharging, and heat transfer operations. We assume that energy supplied by heat transfer technologies cannot be charged into heat storage. To impose this assumption, we introduce auxiliary decision variables $o_{r,q,t,n} \smallge 0$ in the demand satisfaction constraint sets~\eqref{eq:demandConstr} and~\eqref{eq:demandConstr2}, whose non-negativity restricts storage charging to be supplied only by on-site generation and exogenous procurement. The indicator variables defined in~\eqref{eq:indicatorheattransfer} assign energy-category-specific coefficients to heat transfer operations, taking distinct values for heat and electricity, respectively.

{
\small
\begin{flalign}
\label{eq:indicatorheattransfer}
& \mathbbm{1}_{a,r,q,t,t',n} =
\begin{cases}
1-\sigma_{a,t,t'}, \quad \quad & \text{if } r=\mathrm{heat} \\[3pt]
\dfrac{-1}{\mathrm{cop}_{a,q,t,n}}, \ \ \quad \quad & \text{if } r=\mathrm{electricity}
\end{cases} & a \in \mathcal{A}, q \in \mathcal{Q}, n \in \mathcal{N}, t' \in \mathcal{T}_n, t \in \mathcal{T}'_{a,[t')}
\end{flalign}
}

Storage balance constraints~\eqref{eq:storagebalanceConstr} govern storage dynamics by accounting for charging and discharging efficiencies as well as self-discharge losses. The stored energy levels and heat transfer capacities are bounded by the installed technology capacities through constraints~\eqref{eq:storagecapacityConstr} and~\eqref{eq:heattransferConstr}, respectively. Notably, heat transfer decision variables and capacities are modeled in a disaggregated form to accurately capture technology degradation and the coefficient of performance.

To ensure environmental compliance, the percentage of annual demand that can be met by emission-causing exogenous energy purchases is constrained by~\eqref{eq:emissionConstr} for each energy category. Financial and spatial feasibility are enforced through budget constraints~\eqref{eq:budgetConstr}, which cap annual installations based on available capital, and spatial constraints~\eqref{eq:spatialConstr}, which limit the cumulative footprint of installed systems. The integration of pre-existing technologies at the start of the planning horizon is handled by initialization constraints~\eqref{eq:initializationConstr}.

Installation decisions for energy technologies are represented by the non-negative integer variables defined in~\eqref{eq:VarDomain1}. Their upper bounds, which follow from the available budget and the corresponding per-unit installation costs, are imposed explicitly as valid bounds on the integer variables. The operational decision variables are defined as non-negative continuous variables in~\eqref{eq:VarDomain2}--\eqref{eq:VarDomain3}.

\subsection{Robust Reformulation}
\label{subsec:robust-reformulation}

Alongside the investment plan obtained by solving model~\eqref{eq:stochasticModel} with nominal operational profiles, we construct more conservative plans through a robust reformulation that protects the investment decisions against unfavorable realizations of the uncertain operational parameters: energy demand, renewable generation, and the coefficient of performance. 
We enclose these parameters in a box uncertainty set  $[\,v_{d,i} - \epsilon\,\hat{v}_{d,i},\ v_{d,i} + \epsilon\,\hat{v}_{d,i}\,]$, where the nominal value and its deviation are respectively denoted as   $v_{d,i}$ and $\hat{v}_{d,i} \smallge 0$, and the robustness parameter is   $\epsilon \smallge 0$. 
The uncertain parameters appear only in the demand satisfaction constraints~\eqref{eq:demandConstr} and~\eqref{eq:demandConstr2}, where each enters monotonically: feasibility is most tightly constrained when demand is high and when renewable generation and the coefficient of performance are low. The worst case over the uncertainty set is therefore attained at a vertex of the box, so it suffices to protect against this single worst-case realization rather than the entire set. Accordingly, we displace each parameter to the adverse end of its interval---energy demand to $v_{d,i} + \epsilon\,\hat{v}_{d,i}$, and renewable generation and the coefficient of performance to $v_{d,i} - \epsilon\,\hat{v}_{d,i}$---and obtain the investment decisions under these displaced values. Setting $\epsilon = 0$ collapses the uncertainty set to the nominal point and recovers the original model, whereas larger values enlarge the set and yield increasingly conservative investment decisions. The robust model therefore retains the structure of the nominal one and can be solved in the same way.

\subsection{Derivation of Valid Inequalities}

In this subsection, we derive a family of valid inequalities that are expressed solely in terms of the investment decision variables.

\begin{proposition}
For each $r \in \mathcal{R}$, the following  inequalities are valid for the MSSP model~\eqref{eq:stochasticModel} when $\gamma_{r} = 0$:

{\footnotesize
\begin{flalign}
& \eta^{\smallminus}_{r} \eta_{r} \sum_{s \in \mathcal{S}} \sum_{t \in \mathcal{T}'_{s,[\hat{\kappa}_{\underline{q},T,n})}} \upsilon_{s,r,t,\hat{\kappa}_{\underline{q},T,n}} x_{s,t,\mu_{t,n}} \smallplus \sum_{q=\underline{q}}^{\overline{q}} \sum_{j \in \mathcal{J}} \sum_{t \in \mathcal{T}'_{j,[T)}} \Gamma_{j,r,q,t,T,n} x_{j,t,\mu_{t,n}} & \notag \\
& \smallplus \sum_{q=\underline{q}}^{\overline{q}}  \sum_{a \in \mathcal{A}} \sum_{t \in \mathcal{T}'_{a,[T)}} \varsigma_{a,r,t,T} x_{a,t,\mu_{t,n}} \smallge \sum_{q=\underline{q}}^{\overline{q}} \mathit{\delta}_{r,q,T}
& r \in \mathcal{R}, n \in \mathcal{N}_{l}, \underline{q} \in \mathcal{Q}, \overline{q} \in  \{\underline{q}, \ldots, Q\} \label{eq:validineq10} \end{flalign}
}

\noindent
where the coefficients $\varsigma_{a,r,t,T}$ are defined as follows:

{\footnotesize
\begin{flalign}
& \varsigma_{a,r,t,T} =
\begin{cases}
\varphi_{a} \ (1\smallminus\sigma_{a,t,T}), \quad \quad &\text{if } r=\mathrm{heat} \\[3pt]
0, \quad \quad \quad \quad \quad \quad \quad &\text{if } r=\mathrm{electricity}
\end{cases} & a \in \mathcal{A}, t \in \mathcal{T}'_{a,[T)} \label{eq:varsigmaint}
\end{flalign}
}

\end{proposition}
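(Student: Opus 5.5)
The plan is to aggregate the operational constraints of a leaf node $n \in \mathcal{N}_l$ in the final period $T$ over the window of sub-periods $\underline{q},\ldots,\overline{q}$. All operational variables are then eliminated, using only the constraints of model~\eqref{eq:stochasticModel} and the physical assumptions $0 < \eta_r, \eta^{\smallplus}_r, \eta^{\smallminus}_r \le 1$.

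\textbf{Step 1 (remove purchases).} Since $\gamma_r = 0$, constraint~\eqref{eq:emissionConstr} gives $\sum_{q} p_{r,q,T,n} \le 0$. Together with $p \ge 0$ this forces $p_{r,q,T,n} = 0$ for all $q \in \mathcal{Q}$.

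\textbf{Step 2 (single-sub-period bound).} Fix $q$ and chain~\eqref{eq:demandConstr} with~\eqref{eq:demandConstr2}:
\[
\delta_{r,q,T} \le h^{\smallminus}_{r,q,T,n} - h^{\smallplus}_{r,q,T,n} + \sum_{j}\sum_{t} \Gamma_{j,r,q,t,T,n} x_{j,t,\mu_{t,n}} + \sum_{a}\sum_{t} \mathbbm{1}_{a,r,q,t,T,n}\, m_{a,q,t,T,n}.
\]
I would then bound the heat-transfer term by $\sum_a\sum_t \varsigma_{a,r,t,T} x_{a,t,\mu_{t,n}}$, treating the two cases of~\eqref{eq:indicatorheattransfer} separately:
\begin{itemize}
\item For $r=\mathrm{heat}$, the coefficient $1-\sigma_{a,t,T}$ is nonnegative, so~\eqref{eq:heattransferConstr} gives $(1-\sigma_{a,t,T}) m \le \varphi_a(1-\sigma_{a,t,T}) x$.
\item For $r=\mathrm{electricity}$, the coefficient $-1/\mathrm{cop}$ is nonpositive and $m \ge 0$, so the term is at most $0 = \varsigma$.
\end{itemize}

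\textbf{Step 3 (telescope the storage terms).} Summing over $q=\underline{q},\ldots,\overline{q}$, it remains to show
\[
\sum_{q=\underline{q}}^{\overline{q}} \bigl(h^{\smallminus}_{r,q,T,n} - h^{\smallplus}_{r,q,T,n}\bigr) \le \eta^{\smallminus}_r \eta_r\, h_{r,\kappa_{\underline{q},T,n}}.
\]
Solving~\eqref{eq:storagebalanceConstr} for the discharge gives
\[
h^{\smallminus}_q = \eta^{\smallminus}_r\bigl(\eta_r h_{\kappa_q} - h_q\bigr) + \eta^{\smallminus}_r\eta^{\smallplus}_r h^{\smallplus}_q.
\]
Because $\eta^{\smallminus}_r\eta^{\smallplus}_r \le 1$ and $h^{\smallplus}_q \ge 0$, this yields $h^{\smallminus}_q - h^{\smallplus}_q \le \eta^{\smallminus}_r(\eta_r h_{\kappa_q} - h_q)$. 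Inside the window $\kappa_q$ is just sub-period $q-1$, so summing gives
\[
\eta^{\smallminus}_r\eta_r h_{\kappa_{\underline{q}}} - \eta^{\smallminus}_r h_{\overline{q}} + \eta^{\smallminus}_r(\eta_r - 1)\sum_{q=\underline{q}}^{\overline{q}-1} h_q.
\]
Since $\eta_r \le 1$ and $h \ge 0$, the last two terms are nonpositive, which gives the claimed bound.

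\textbf{Step 4 (bound the initial inventory).} Finally, apply~\eqref{eq:storagecapacityConstr} to the preceding state $h_{r,\kappa_{\underline{q},T,n}}$. This lies in period $\hat{\kappa}_{\underline{q},T,n}$, which equals $T$ if $\underline{q} > 1$ and $T-1$, possibly at the parent node, if $\underline{q}=1$. The capacity bound gives the storage term of~\eqref{eq:validineq10}. In both cases the capacity is written through $x_{s,t,\mu_{t,n}}$ with the ancestor map, so it matches the left-hand side of~\eqref{eq:validineq10}.

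I expect the main obstacle to be Step 3. The telescoping must be done carefully, and it relies on the implicit assumptions that the efficiencies and the self-discharge factor lie in $(0,1]$, so the proof should state these explicitly. A related point to handle with care is the index bookkeeping at the boundary $\underline{q}=1$, where $\kappa$ and $\hat{\kappa}$ point to the previous period and possibly to an ancestor node.
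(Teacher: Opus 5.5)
Your proposal is correct and follows essentially the same argument as the paper: you use $\gamma_r=0$ to zero out the final-period purchases, substitute the storage balance and drop the charging term using $\eta^{\smallminus}_r\eta^{\smallplus}_r\le 1$, telescope the storage levels over the window and discard the nonpositive interior and terminal terms, bound the entering inventory by the storage-capacity constraint, and bound the heat-transfer terms by capacity while dropping the negative electricity terms. The differences are cosmetic. You eliminate $o_{r,q,T,n}$ at the outset by chaining the two demand constraints, whereas the paper combines the supply-side and demand-side aggregates at the end. You also state the efficiency assumptions and the $\underline{q}=1$ boundary bookkeeping more explicitly than the paper does.
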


\begin{proof}

Consider a leaf node $n \in \mathcal{N}_{l}$ and an energy category $r \in \mathcal{R}$. For $\gamma_{r} = 0$, the emission constraints~\eqref{eq:emissionConstr} prohibit exogenous energy purchases in period~$T$. Consequently, the variables $p_{r,q,T,n}$ can be eliminated from~\eqref{eq:demandConstr2}. Substituting the discharge variables $h_{r,q,T,n}^{\smallminus}$ using the storage balance constraints~\eqref{eq:storagebalanceConstr} yields~\eqref{eq:validineq1}:

{\footnotesize
\begin{flalign}
    & \eta^{\smallminus}_{r} \eta_{r} h_{r,\kappa_{q,T,n}} \smallminus \eta^{\smallminus}_{r} h_{r,q,T,n} \smallplus (\eta^{\smallminus}_{r} \eta^{\smallplus}_{r} \smallminus 1) h_{r,q,T,n}^{\smallplus} \smallplus \sum_{j \in \mathcal{J}} \sum_{t \in \mathcal{T}'_{j,[T)}} \Gamma_{j,r,q,t,T,n} \ x_{j,t,\mu_{t,n}} \smallge o_{r,q,T,n}
    & q \in \mathcal{Q}\label{eq:validineq1}
\end{flalign}
}

Given that $\eta_r^{\smallplus},\eta_r^{\smallminus}\in(0,1)$, the coefficient $(\eta_r^{\smallminus}\eta_r^{\smallplus}\smallminus1)$ of $h_{r,q,T,n}^{\smallplus}$ in~\eqref{eq:validineq1} is strictly negative. Since $h_{r,q,T,n}^{\smallplus}\ge 0$ and the inequality is of the form ``$\smallge$'', dropping this term preserves its validity, resulting in~\eqref{eq:validineq2}:

{\footnotesize
\begin{flalign}
    & \eta^{\smallminus}_{r} (\eta_{r} h_{r,\kappa_{q,T,n}} \smallminus h_{r,q,T,n}) \smallplus \sum_{j \in \mathcal{J}} \sum_{t \in \mathcal{T}'_{j,[T)}} \Gamma_{j,r,q,t,T,n} \ x_{j,t,\mu_{t,n}} \smallge o_{r,q,T,n}
    & q \in \mathcal{Q}\label{eq:validineq2}
\end{flalign}
}

Summing~\eqref{eq:validineq2} over the interval $q \in \{\underline{q},\ldots,\overline{q}\}$ for any $\underline{q} \in \mathcal{Q}$ and $\overline{q} \in \{\underline{q},\ldots,Q\}$ yields~\eqref{eq:validineq3}:

{\footnotesize
\begin{flalign}
    & \sum_{q=\underline{q}}^{\overline{q}} \left( \eta^{\smallminus}_{r} (\eta_{r} h_{r,\kappa_{q,T,n}} \smallminus h_{r,q,T,n}) \smallplus \sum_{j \in \mathcal{J}} \sum_{t \in \mathcal{T}'_{j,[T)}} \Gamma_{j,r,q,t,T,n} \ x_{j,t,\mu_{t,n}} \right) \smallge \sum_{q=\underline{q}}^{\overline{q}} o_{r,q,T,n}
    & \underline{q} \in \mathcal{Q}, \overline{q} \in  \{\underline{q}, \ldots, Q\}\label{eq:validineq3}
\end{flalign}
}

Upon rearranging the terms, the interior storage variables $(h_{r,q,T,n})_{q=\underline{q}+1}^{\overline{q}-1}$ take the coefficient $\eta_r^{\smallminus}(\eta_r\smallminus1)$, and $h_{r,\overline{q},T,n}$ takes the coefficient $\smallminus\eta_r^{\smallminus}$. As these coefficients are negative and the storage variables are nonnegative, dropping these terms maintains the validity of the inequality, providing~\eqref{eq:validineq5}:

{\footnotesize
\begin{flalign}
    & \eta^{\smallminus}_{r} \eta_{r} h_{r,\kappa_{\underline{q},T,n}} \smallplus \sum_{q=\underline{q}}^{\overline{q}} \sum_{j \in \mathcal{J}} \sum_{t \in \mathcal{T}'_{j,[T)}} \Gamma_{j,r,q,t,T,n} \ x_{j,t,\mu_{t,n}} \smallge \sum_{q=\underline{q}}^{\overline{q}} o_{r,q,T,n}
    & \underline{q} \in \mathcal{Q}, \overline{q} \in  \{\underline{q}, \ldots, Q\}\label{eq:validineq5}
\end{flalign}
}

Next, multiplying the storage capacity constraints~\eqref{eq:storagecapacityConstr} by $\eta_r^{\smallminus}\eta_r$ gives~\eqref{eq:validineq6}.

{\footnotesize
\begin{flalign}
    & \eta^{\smallminus}_{r} \eta_{r} \sum_{s \in \mathcal{S}} \sum_{t \in \mathcal{T}'_{s,[\hat{\kappa}_{\underline{q},T,n})}} \upsilon_{s,r,t,\hat{\kappa}_{\underline{q},T,n}} \ x_{s,t,\mu_{t,n}} \ge \eta^{\smallminus}_{r} \eta_{r} h_{r,\kappa_{\underline{q},T,n}}
    & \underline{q} \in \mathcal{Q}\label{eq:validineq6}
\end{flalign}
}

Combining~\eqref{eq:validineq5} and~\eqref{eq:validineq6} eliminates the remaining storage variables, yielding~\eqref{eq:validineq7}:

{\footnotesize
\begin{flalign}
    & \eta^{\smallminus}_{r} \eta_{r} \sum_{s \in \mathcal{S}} \sum_{t \in \mathcal{T}'_{s,[\hat{\kappa}_{\underline{q},T,n})}} \upsilon_{s,r,t,\hat{\kappa}_{\underline{q},T,n}} x_{s,t,\mu_{t,n}} \smallplus \sum_{q=\underline{q}}^{\overline{q}} \sum_{j \in \mathcal{J}} \sum_{t \in \mathcal{T}'_{j,[T)}} \Gamma_{j,r,q,t,T,n} \ x_{j,t,\mu_{t,n}} \smallge \sum_{q=\underline{q}}^{\overline{q}} o_{r,q,T,n}
    & \underline{q} \in \mathcal{Q}, \overline{q} \in  \{\underline{q}, \ldots, Q\}\label{eq:validineq7}
\end{flalign}
}

On the demand side, summing the demand satisfaction constraints~\eqref{eq:demandConstr} over $q \in \{\underline{q},\ldots,\overline{q}\}$ provides~\eqref{eq:validineq8}:

{\footnotesize
\begin{flalign}
    & \sum_{q=\underline{q}}^{\overline{q}} \left( o_{r,q,T,n} \smallplus  \sum_{a \in \mathcal{A}} \sum_{t \in \mathcal{T}'_{a,[T)}} \mathbbm{1}_{a,r,q,t,T,n} \ m_{a,q,t,T,n} \right) \smallge \sum_{q=\underline{q}}^{\overline{q}} \mathit{\delta}_{r,q,T}
    & \underline{q} \in \mathcal{Q}, \overline{q} \in  \{\underline{q}, \ldots, Q\}\label{eq:validineq8}
\end{flalign}
}

Since the coefficients $\mathbbm{1}_{a,r,q,t,T,n}$ are negative when $r = \mathrm{electricity}$, omitting these terms preserves the validity of the inequalities. To formalize this, we introduce new coefficients $\varsigma_{a,r,t,T}$ (defined in~\eqref{eq:varsigmaint}) that retain only the nonnegative heat coefficients. Substituting these new coefficients and bounding the heat transfer variables by their capacity constraints~\eqref{eq:heattransferConstr} yields the simplified inequalities~\eqref{eq:validineq9}:

{\footnotesize
\begin{flalign}
    & \sum_{q=\underline{q}}^{\overline{q}} \left( o_{r,q,T,n} \smallplus  \sum_{a \in \mathcal{A}} \sum_{t \in \mathcal{T}'_{a,[T)}} \varsigma_{a,r,t,T} \ x_{a,t,\mu_{t,n}} \right) \smallge \sum_{q=\underline{q}}^{\overline{q}} \mathit{\delta}_{r,q,T}
    & \underline{q} \in \mathcal{Q}, \overline{q} \in  \{\underline{q}, \ldots, Q\}\label{eq:validineq9}
\end{flalign}
}

Finally, combining~\eqref{eq:validineq7} and~\eqref{eq:validineq9} yields~\eqref{eq:validineq10}, showing that the family of inequalities given in~\eqref{eq:validineq10} is valid.
\end{proof}

Inequalities in~\eqref{eq:validineq10} admit a direct physical interpretation. For any contiguous block of sub-periods in the final period, the aggregate demand within that block must be covered by the maximum energy that can be supplied by the installed resources. This amount consists of three components: (i) the usable energy recoverable from installed storage capacity, under the most favorable assumption that the storage system enters the block fully charged; (ii) the energy generated by installed technologies during the block; and, for heat demand only, (iii) the energy deliverable through installed heat-transfer capacity. Hence, these inequalities are necessary conditions for operational feasibility expressed solely in terms of the investment decision variables. Although inequalities in~\eqref{eq:validineq10} are implied by~\eqref{eq:stochasticModel} and therefore redundant for the original formulation, they strengthen the master problem   in the Benders decomposition algorithm described in Section~\ref{subsec:benders-decomposition}, which only contains   investment decision variables. Section~\ref{subsec:separation-valid-inequalities} describes how these inequalities are used as cutting planes within the proposed algorithm.

\section{Uncertainty Modeling}
\label{sec:uncertainty-modeling}

This section describes how uncertainty is characterized and embedded in the proposed framework. We distinguish two categories: Technological uncertainty concerns the long-term evolution of technology installation costs and efficiencies over the planning horizon while operational uncertainty concerns the short-term variability of energy demand and technology performance within each operating period. Technological uncertainty determines the stochastic parameters in the MSSP model, while operational uncertainty is incorporated into the robust model extension and the out-of-sample Monte Carlo simulations.

\subsection{Technological Uncertainty}

We build the scenario tree in two steps, following our earlier work~\citep{csener2025dynamic}. First, we cluster historical cost-reduction and efficiency-improvement rates to estimate technological advancement multipliers (Section~\ref{subsubsec:adv-multipliers}). Second, we propagate these multipliers from an initial deterministic state to obtain the cost and performance parameters at every node of the scenario tree (Section~\ref{subsection:construct_scenariotree}).

\subsubsection{Technological Advancement Multipliers}
\label{subsubsec:adv-multipliers}

Motivated by Moore’s Law, which characterizes technological performance improvement as exponential over time~\citep{Moore1998}, we model technological progress through logarithmic improvement rates. Let $\nu_{u}^t \in \mathbb{R}_+^2$ denote the cost and efficiency of technology $u \in \mathcal{U}$ in year $t \in \{1,\dots,T\}$. For a stage length $\tau$, the $\tau$-year logarithmic improvement rates for cost ($\xi_{u,1}^t$) and efficiency ($\xi_{u,2}^t$) are defined as:

{\small
\begin{flalign}
    & \xi_{u,1}^t := \ln\left(\frac{\nu^{t}_{u,1}}{\nu^{t+\tau}_{u,1}}\right), \quad \xi_{u,2}^t := \ln\left(\frac{\nu^{t+\tau}_{u,2}}{\nu^{t}_{u,2}}\right) & t=1,\dots,T\smallminus\tau. \notag
\end{flalign}
}

To capture the empirical distribution of these improvements, we partition the observed rate vectors $\xi_u^t = (\xi_{u,1}^t, \xi_{u,2}^t)^\top \in \mathbb{R}_+^2$ into $|K_u|$ clusters. We determine the optimal cluster representatives $w_u^k \in \mathbb{R}_+^2$ and the binary assignment variables $z_{u,t,k} \in \{0,1\}$ by solving a mixed-integer nonlinear program (MINLP) that minimizes the within-cluster sum of squared $\ell_2$-distances:

{\small
\begin{flalign}
    & \min_{w_u^k \in \mathbb{R}_+^2, \ z_{u,t,k} \in \{0,1\}} \left\{ \sum_{t=1}^{T\smallminus\tau} \sum_{k \in K_u} z_{u,t,k} \| w_u^k \smallminus \xi_u^t \|_2^2 : \sum_{k \in K_u} z_{u,t,k} = 1, \ t=1,\dots,T\smallminus\tau \right\}. 
 \notag
\end{flalign}
}

Let $\tilde{w}_u^k$ and $\tilde{z}_{u,t,k}$ be the optimal solutions to this MINLP. We define the $\tau$-year improvement multipliers for each cluster $k$ as $\Xi_{u,1}^k := \exp(\smallminus\tilde{w}_{u,1}^k)$ and $\Xi_{u,2}^k := \exp(\tilde{w}_{u,2}^k)$. We then assume the technological parameters evolve according to the multiplier vector $\Xi_u^k = (\Xi_{u,1}^k, \Xi_{u,2}^k)^\top$ with probability $\chi_u^k := \frac{1}{T\smallminus\tau}\sum_{t=1}^{T\smallminus\tau} \tilde{z}_{u,t,k}$. This probability represents the proportion of observations assigned to cluster $k \in K_u$.

\subsubsection{Scenario Tree Construction}
\label{subsection:construct_scenariotree}

Using the multipliers derived above, we construct the scenario tree following the standard finite, rooted-tree representation of the underlying stochastic process~\citep{dupavcova2000scenarios}. An independent tree is generated for each technology $u \in \mathcal{U}$, where $T_u = (N(T_u), E(T_u))$ is a full, balanced tree with $\sum_{i=1}^{|\Psi|} |K_u|^{i \smallminus 1}$ nodes. Each node $n \in N(T_u)$ carries three attributes: an installation cost, a node probability $\pi_n$, and a technology performance vector (e.g., energy generation output or coefficient of performance).

To illustrate the tree construction, consider an energy generation technology $j \in \mathcal{J}$. The root node $n_{\mathrm{root}} \in N(T_j)$ represents the deterministic initial state at stage 1 (i.e., $\psi_{n_{\mathrm{root}}} = 1$), where the initial installation cost $\alpha_{j,1}$ and the initial energy generation profile $(\Gamma_{j,r,q,t,t,1})_{t \in \mathcal{T}_1}$ are known, deterministic parameters. The root node probability is initialized as $\pi_{n_{\mathrm{root}}} = 1$.

Each non-leaf node branches into $|K_j|$ child nodes, each corresponding to the realization of a distinct cluster $k \in K_j$. For any non-root node $n \in N(T_j) \setminus \{n_{\mathrm{root}}\}$, let $p(n)$ denote its immediate parent and $k_n$ the cluster realization governing the transition from $p(n)$ to $n$. The edge set is then defined as $E(T_j) = \{(p(n), n) : n \in N(T_j) \setminus \{n_{\mathrm{root}}\}\}$. The installation cost, node probability, and generation vector are then computed recursively as:

{\small
\begin{flalign}
    & \alpha_{j,n} = \alpha_{j,p(n)} \ \Xi_{j,1}^{k_n}, \quad \pi_n = \pi_{p(n)} \ \chi_j^{k_n} & j \in \mathcal{J}, n \in N(T_j) \setminus \{n_{\mathrm{root}}\} \notag \\
    & \Gamma_{j,r,q,t,t,n} = \Gamma_{j,r,q,\bar{t},\bar{t},p(n)} \ \Xi_{j,2}^{k_n} & j \in \mathcal{J}, r \in \mathcal{R}, q \in \mathcal{Q}, n \in N(T_j) \setminus \{n_{\mathrm{root}}\}, t \in \mathcal{T}_n, \bar{t} \in \mathcal{T}_{p(n)}. \notag
\end{flalign}
}

For any period $t' > t$, the generation profile is further adjusted by a degradation factor $\sigma_{j,t,t'}$:

{\small
\begin{flalign}
    & \Gamma_{j,r,q,t,t',n} = \Gamma_{j,r,q,t,t,n} \cdot (1 - \sigma_{j,t,t'}) 
    & j \in \mathcal{J}, r \in \mathcal{R}, q \in \mathcal{Q}, n \in N(T_j), t \in \mathcal{T}_n, t' \in \mathcal{T}_{j,[t)}. \notag
\end{flalign}
}

Once the individual trees $T_u$ have been built for all $u \in \mathcal{U}$, they are assembled into a unified scenario tree $T = (N(T), E(T))$. Let $N_s(T_u) := \{n_u \in N(T_u) : \psi_{n_u} = s\}$ denote the set of nodes in tree $T_u$ belonging to stage $s \in \Psi$. Since the technologies are assumed to evolve independently, the node set at any stage $s$ is the Cartesian product of the per-technology node sets $N_s(T_u)$ across all $u \in \mathcal{U}$. We further introduce a deterministic initialization node (node~$0$), the parent of $n_{\mathrm{root}}$, which encodes the technologies already in place prior to the first decision stage and has probability $\pi_{0} = 1$. Consequently, each node $n \in N(T)$ takes the form of a tuple $n = (n_u)_{u \in \mathcal{U}}$, capturing the joint realization of all technologies at that stage. The full node and edge sets of $T$ are thus:

{\small
\begin{flalign}
    & N(T) = \left\{0\right\} \cup \bigcup_{s \in \Psi} \prod_{u \in \mathcal{U}} N_s(T_u), \quad E(T) = \left\{(0, n_{\mathrm{root}})\right\} \cup \left\{ (m, n) \in N(T) \times N(T) : (m_u, n_u) \in E(T_u), \ u \in \mathcal{U} \right\}. \notag
\end{flalign}
}

The unconditional probability of each node $n \in N(T)$ is obtained as the product of its component node probabilities, $\pi_n = \prod_{u \in \mathcal{U}} \pi_{n_u}$. The attributes encoded in the resulting scenario tree constitute the stochastic parameters of the MSSP model given in~\eqref{eq:stochasticModel}.

\subsection{Operational Uncertainty}
\label{subsec:operational-uncertain-data}

Energy demand, renewable generation, and the coefficient of performance of heat-transfer technologies are inherently uncertain, driven by exogenous factors such as weather conditions and human activity~\citep{ZAKARIA20201543}.
We characterize these uncertainties through probability distributions derived from historical operational data via time-series clustering---a method commonly applied in clean energy transition planning to identify representative days~\citep{LI2022107697, TEICHGRAEBER20191283}.
To this end, we divide a given historical time series into consecutive daily profiles. Let $\mathcal{D}$ denote the set of days and $v_{d,i}$ the observation for day $d \in \mathcal{D}$ at hour $i \in \{1,\ldots,24\}$. To cluster daily profiles based on their temporal shape rather than absolute magnitude, we standardize each profile using its mean $\bar{v}_{d} = \frac{1}{24} \sum_{i=1}^{24} v_{d,i}$ and standard deviation $s_{d}$, where $s_{d}^{2} = \frac{1}{24} \sum_{i=1}^{24} (v_{d,i} - \bar{v}_{d})^{2}$. The standardized profile is then $g_{d,i} = (v_{d,i} - \bar{v}_{d})/s_{d}$.

Given these standardized daily profiles, we partition them into clusters. Since no clustering method is universally superior~\citep{LI2022107697, TEICHGRAEBER20191283}, we evaluate several approaches---including $k$-means and hierarchical clustering under various parameter settings---and select the configuration with the lowest intra-cluster variance. This configuration is agglomerative hierarchical clustering with Euclidean distance and Ward's minimum-variance linkage criterion, restricted to at most five clusters.

Let $\mathcal{C}$ denote the set of clusters, and for each $c \in \mathcal{C}$, let $\mathcal{D}_c \subseteq \mathcal{D}$ denote the subset of days assigned to it. The hourly cluster mean and variance are then computed as $\bar{g}_{c,i} = \frac{1}{|\mathcal{D}_c|}\sum_{d \in \mathcal{D}_c} g_{d,i}$ and $s_{c,i}^{2} = \frac{1}{|\mathcal{D}_c|}\sum_{d \in \mathcal{D}_c} (g_{d,i} - \bar{g}_{c,i})^{2}$. Figure~\ref{fig:cluster-profiles} illustrates the resulting clusters for the 2023 hourly electricity demand series of the case study introduced in Section~\ref{subsec:experimental-setup}, together with the representative profile of each cluster (dashed lines). The representative profile of a cluster is the member profile with the minimum total Euclidean distance to all others in the cluster.
These cluster statistics define the probability distributions used to represent operational uncertainty. For each cluster $c \in \mathcal{C}$, day $d \in \mathcal{D}_c$, and hour $i$, we sample the standardized value as $\tilde{g}_{d,i} \sim \mathrm{N}(g_{d,i}, s_{c,i}^{2})$, introducing cluster variability at each hour around each observed value. Transforming back to the original scale gives $\tilde{v}_{d,i} = s_d \tilde{g}_{d,i} + \bar{v}_d \sim \mathrm{N}(v_{d,i}, (s_d s_{c,i})^{2})$. Each resulting distribution is therefore centered on the observed value, with variance jointly governed by the within-day standard deviation of the original profile and the hour-specific variability of the corresponding cluster. Applying this procedure independently to each series, we obtain the probability distributions for the demand parameters $\delta_{r,q,t}$, the generation parameters $\Gamma_{j,r,q,t,t',n}$, and the coefficients of performance $\mathrm{cop}_{a,q,t,n}$.

\begin{figure}[H]
  \pgfplotsset{
    clusteraxis/.style={
      scale only axis, width=\clusteraxisw, height=2.4cm,
      unbounded coords=jump,
      xmin=0.4, xmax=24.6, xtick={1,8,16,24},
      ymin=-3, ymax=3, ytick={-3, 0, 3},
      major tick length=0pt,
      every tick label/.append style={inner xsep=0pt},
      xlabel={Hour}, xlabel style={yshift=0.15cm},
      ylabel={}, yticklabels={},
      tick label style={font=\scriptsize},
      label style={font=\scriptsize},
      title style={font=\scriptsize},
      every axis title shift=1pt,
      every axis plot/.append style={mark=none},
    }
  }
  \centering
  \newsavebox{\clustercalbox}
  \newlength{\clusteraxisw}
  \setlength{\clusteraxisw}{3cm}
  \sbox{\clustercalbox}{

  \caption{Clusters of standardized daily profiles of the 2023 hourly electricity demand series for the case study introduced in Section~\ref{subsec:experimental-setup}. Dashed lines show the representative profile of each cluster.}
  \label{fig:cluster-profiles}
\end{figure}

\section{Methodology}
\label{sec:methodology-benders}

In this section, we present our solution approach for the clean energy transition planning problem introduced in Section~\ref{sec:modelling-framework}. We begin by recasting the model in a compact form and introducing a scenario-wise reformulation, which motivates the proposed decomposition scheme. Section~\ref{subsec:benders-decomposition} then develops the Benders decomposition algorithm with partial non-anticipativity relaxation, and Section~\ref{subsec:rolling-horizon-sim} describes the rolling-horizon simulation used to evaluate the resulting investment plans in an out-of-sample analysis.

Let $x$ and $y$ denote the vectors of investment and operational decision variables, respectively. Let $c$ and $d$ represent their corresponding objective coefficient vectors. We define the constraint sets that do not couple investment and operational decision variables respectively as $\mathcal{X} := \left\{ x : \eqref{eq:budgetConstr}, \eqref{eq:spatialConstr}, \eqref{eq:initializationConstr}, \eqref{eq:VarDomain1} \right\}$ and $\mathcal{Y} := \left\{ y : \eqref{eq:demandConstr}, \eqref{eq:storagebalanceConstr}, \eqref{eq:emissionConstr}, \eqref{eq:VarDomain2}, \eqref{eq:VarDomain3} \right\}$. The constraints coupling these decisions---namely~\eqref{eq:demandConstr2},~\eqref{eq:storagecapacityConstr}, and~\eqref{eq:heattransferConstr}---are expressed generically as $Ax + By \ge b$. Then, the model~\eqref{eq:stochasticModel} can be provided in compact form as
\begin{equation}
\label{eq:compact-model-form}
\min_{\substack{x \in \mathcal{X}, y \in \mathcal{Y}}}
\left\{
c^{\top} x \smallplus d^{\top} y : Ax + By \ge b
\right\}.
\end{equation}

We now provide a scenario-wise reformulation of the model with partial non-anticipativity relaxation. Let $\Omega$ denote the set of scenario paths in the scenario tree, and let $\Pi_\omega$ denote the probability of path $\omega \in \Omega$. For each path, let $x_\omega$ and $y_\omega$ denote the associated investment and operational decision vectors, with $c_\omega$ and $d_\omega$ the corresponding cost coefficient vectors (excluding node probabilities), and let $A_\omega$, $B_\omega$, and $b_\omega$ denote the path-specific constraint parameters. In a standard MSSP formulation, non-anticipativity constraints require that any two scenario paths passing through the same node take identical decisions at that node. We relax these constraints for the operational variables while preserving them for the investment variables. The relaxation renders the operational feasible region separable across paths, yielding path-specific sets $\mathcal{Y}_\omega$. Then the relaxed problem becomes:

\begin{equation}
\label{eq:relaxed-mssp}
\min_{\substack{(x_\omega, y_\omega)_{\omega \in \Omega}}}
\left\{
\sum_{\omega \in \Omega} \Pi_\omega (c_{\omega}^{\top} \ x_\omega
\smallplus d_{\omega}^{\top} \ y_\omega) :
\begin{array}{ll}
A_\omega \, x_\omega \smallplus B_\omega \, y_\omega \smallge b_\omega, & \omega \in \Omega \\
y_\omega \in \mathcal{Y}_\omega, & \omega \in \Omega \\
(x_\omega)_{\omega \in \Omega} \in \mathcal{X}
\end{array}
\right\}.
\end{equation}

Because non-anticipativity is no longer enforced on the operational variables, an optimal solution to~\eqref{eq:relaxed-mssp} may assign different operational decisions to scenario paths that share the same node. Once an optimal solution to~\eqref{eq:relaxed-mssp} is obtained, we restore non-anticipativity through the procedure described in Section~\ref{subsec:restore-non-anticipativity}.

\subsection{Benders Decomposition with Partial Non-anticipativity Relaxation}
\label{subsec:benders-decomposition}

For a fixed investment decision vector $(x_\omega^\ast)_{\omega \in \Omega}$, problem~\eqref{eq:relaxed-mssp} decomposes by scenario path into $|\Omega|$ independent subproblems. For each scenario path $\omega \in \Omega$, the corresponding subproblem is given by

\begin{flalign}
\label{eq:subproblem_primal}
& \boldsymbol{SP}_\omega(x_\omega^\ast) :
\min_{y_\omega \in \mathcal{Y}_\omega}
\left\{
d_\omega^\top \, y_\omega
\;:\;
B_\omega \, y_\omega \smallge b_\omega \smallminus A_\omega \, x_\omega^\ast
\right\}, & \omega \in \Omega.
\end{flalign}

To approximate the objective value of each scenario-path subproblem within the master problem, we introduce auxiliary variables $(\theta_\omega)_{\omega \in \Omega}$ and progressively refine this approximation with Benders cuts generated over a sequence of iterations indexed by $k$. At iteration $k$, the scenario-path subproblems $\boldsymbol{SP}_\omega(x_\omega^{(k)})$ are solved at the current candidate investment vector $(x_\omega^{(k)})_{\omega \in \Omega}$. A feasible subproblem yields an optimal dual extreme point $\lambda_\omega^{(k)}$ that generates an optimality cut, whereas an infeasible subproblem yields a dual extreme ray $\mu_\omega^{(k)}$ that generates a feasibility cut. A newly generated point or ray is retained only if its associated cut is violated by the current candidate solution. Let $\Lambda^{\mathrm{opt}}_\omega$ and $\Lambda^{\mathrm{feas}}_\omega$ denote the sets of dual extreme points and extreme rays retained up to iteration $k$, respectively. The associated set of Benders cuts is then given by

\begin{equation}
\label{eq:benderscutset}
C \coloneqq
\left\{
\begin{aligned}
& \theta_\omega \smallge (\lambda_\omega)^\top \bigl( b_\omega \smallminus A_\omega \, x_\omega \bigr),
&& \omega \in \Omega, \; \lambda_\omega \in \Lambda^{\mathrm{opt}}_\omega, \\[2pt]
& 0 \smallge (\mu_\omega)^\top \bigl( b_\omega \smallminus A_\omega \, x_\omega \bigr),
&& \omega \in \Omega, \; \mu_\omega \in \Lambda^{\mathrm{feas}}_\omega
\end{aligned}
\right\}.
\end{equation}

Each element of $C$ is a linear inequality in the master variables $(x_\omega, \theta_\omega)_{\omega \in \Omega}$. Using this cut set, the master problem is defined as

\begin{equation}
\label{eq:master_problem}
\boldsymbol{MP}(C) :
\min_{(x_\omega,\, \theta_\omega)_{\omega \in \Omega}}
\left\{
\sum_{\omega \in \Omega} \Pi_\omega
\bigl( c_\omega^\top x_\omega \smallplus \theta_\omega \bigr)
\;:\;
(x_\omega)_{\omega \in \Omega} \in \mathcal{X}, \;
(x_\omega, \theta_\omega)_{\omega \in \Omega} \text{ satisfies } C
\right\}.
\end{equation}

Algorithm~\ref{alg:benders-decomposition} summarizes the proposed solution procedure, in which the scenario-path subproblems are solved in parallel at each iteration. In addition to the Benders cuts generated from these subproblems, the master problem is strengthened by valid inequalities: the \textsc{SeparationKadane} procedure described in Section~\ref{subsec:separation-valid-inequalities} identifies violated valid inequalities (line~\ref{line:valid-inequality}), which are then added to the master problem together with the Benders cuts.

The procedure is organized into two phases that differ in how the integrality constraints and the cut generation are handled. In Phase~1, the integrality constraints of the master problem are relaxed, so that only its linear relaxation is solved at each Benders iteration, and both feasibility and optimality cuts are added at every iteration to accelerate convergence. The algorithm switches to Phase~2 once the convergence of the relaxed master problem slows down, that is, when the Benders optimality tolerance is attained by the linear relaxation or the lower bound stagnates over a window of consecutive iterations. Phase~2 reinstates the integrality constraints; both cut types are added in its early iterations, whereas in the later iterations feasibility cuts are prioritized and optimality cuts are added only when all subproblems are feasible, so that the search concentrates on restoring subproblem feasibility. To limit the growth of the master problem, we remove cuts that are no longer binding: a cut whose slack ranks in the highest $50\%$ of all cut slacks for $100$ consecutive iterations is moved from \(C\) to a pool (line~\ref{line:cut-management}), and any cut in the pool that is violated by the current master solution is reintroduced to \(C\). The algorithm terminates when the gap between the best lower bound from the master problem and the incumbent objective value falls within the prescribed Benders optimality tolerance $(\varepsilon_B)$, after which a final step restores non-anticipativity of the operational decisions (line~\ref{line:non-anticipativity-correction}), as described in Section~\ref{subsec:restore-non-anticipativity}.

\begin{algorithm}[H]
\footnotesize
\caption{Benders decomposition framework for the MSSP problem}
\label{alg:benders-decomposition}
\begin{algorithmic}[1]
\Require Set of scenario paths $\Omega$ with their probabilities $(\Pi_\omega)_{\omega \in \Omega}$; model inputs $\mathcal{X}$ and $(c_\omega, d_\omega, b_\omega, A_\omega, B_\omega, \mathcal{Y}_\omega)_{\omega \in \Omega}$; functions $V_{\omega,r} : \{(x_\omega,\underline{q},\overline{q}) \in \mathcal{X}_\omega \!\times\! \mathcal{Q}^2 \mid \underline{q} \le \overline{q}\} \to \mathbb{R}$ for all $\omega \in \Omega$ and all $r \in \mathcal{R}$; Benders tolerance $\varepsilon_{B} \smallge 0$, violation tolerance $\varepsilon_{V} \smallge 0$.
\Ensure Solution $x^\ast, y^\ast$ and objective value $\mathrm{UB}$.

\State $\mathrm{LB}^{(0)} \gets - \infty$, \ $\mathrm{UB} \gets + \infty$, \ $k \gets 1$, \ $C \gets \emptyset$, \ $\mathrm{Phase} \gets 1$

\State Relax the integrality constraints of $\boldsymbol{MP}(C)$.

\While{$\mathrm{Phase} \smalleq 1$ \textbf{or} $|\mathrm{UB} - \mathrm{LB}^{(k-1)}| / |\mathrm{UB}| > \varepsilon_{B}$}

    \State Solve problem $\boldsymbol{MP}(C)$ and let $(x^{(k)},\theta^{(k)})$ and $z_{lb}^{(k)}$ denote the optimal solution and lower bound, respectively.

    \State $C_{\mathrm{feas}} \gets \emptyset$, \ $C_{\mathrm{opt}} \gets \emptyset$

    \For{\textbf{all} $\omega \in \Omega$ \textbf{in parallel}}
    
    \State Solve subproblem $\boldsymbol{SP}_\omega(x_{\omega}^{(k)})$.

    \If{$\boldsymbol{SP}_\omega(x_{\omega}^{(k)})$ is infeasible}

    \State Let $\mu^{(k)}_\omega$ denote a dual extreme ray associated with $\boldsymbol{SP}_\omega(x_{\omega}^{(k)})$, $z_\omega \gets +\infty$.

    \If{$(\mu_\omega^{(k)})^\top (b_\omega \smallminus A_{\omega} \ x_{\omega}^{(k)}) \smallge \varepsilon_{V}$}

    \State $C_{\mathrm{feas}} \gets C_{\mathrm{feas}} \cup \{ 0 \smallge (\mu_\omega^{(k)})^\top (b_\omega \smallminus A_{\omega} \ x_{\omega}) \}$

    \EndIf

    \State $C_{\mathrm{feas}} \gets C_{\mathrm{feas}} \cup \Call{SeparationKadane}{V_{\omega,r}, x_\omega^{(k)}, \mathcal{R}, \mathcal{Q}}$  \label{line:valid-inequality}

    \Else
    \State Let $\lambda_\omega^{(k)}$ and $z_\omega$ denote an optimal dual extreme point and the optimal objective value of $\boldsymbol{SP}_\omega(x_{\omega}^{(k)})$, respectively.

    \If{$(\lambda_\omega^{(k)})^\top (b_\omega \smallminus A_{\omega} \ x_{\omega}^{(k)}) \smallminus \theta_\omega^{(k)} \smallge \varepsilon_{V}$}

    \State $C_{\mathrm{opt}} \gets C_{\mathrm{opt}} \cup \{ \theta_\omega \smallge (\lambda_\omega^{(k)})^\top (b_\omega \smallminus A_{\omega} \  x_{\omega}) \}$

    \EndIf

    \EndIf

    \EndFor

    \If{$\mathrm{Phase} \smalleq 1$ \textbf{or} in the first 200 Phase 2 iterations \textbf{or} an incumbent was found within the last 25 iterations \textbf{or} $C_{\mathrm{feas}} \smalleq \emptyset$}

    \State $C \gets C \cup C_{\mathrm{opt}} \cup C_{\mathrm{feas}}$

    \Else

    \State $C \gets C \cup C_{\mathrm{feas}}$

    \EndIf

    \State $z^{(k)} \gets \sum_{\omega \in \Omega} \Pi_\omega \left(c_\omega^\top \ x^{(k)}_\omega + z_\omega \right)$
    \If{$z^{(k)} < \mathrm{UB}$}
        \State $\mathrm{UB} \gets z^{(k)}$, \  $x^\ast \gets x^{(k)}$
    \EndIf

    \State Update \(C\) by pruning selected cuts and reinstating violated cuts from the pruned cuts pool. \label{line:cut-management}

    \State $\mathrm{LB}^{(k)} \gets \max\{z_{lb}^{(k)}, \mathrm{LB}^{(k-1)}\}$

    \If{$\mathrm{Phase} \smalleq 1$ \textbf{and} $(|\mathrm{UB} - \mathrm{LB}^{(k)}| / |\mathrm{UB}| \smallle \varepsilon_B \ \textbf{or} \ |\mathrm{LB}^{(k)} - \mathrm{LB}^{(\max\{k\smallminus50, 0\})}| / |\mathrm{LB}^{(k)}| \smallle 10^{\smallminus 3})$} \label{line:phase-transition-start}

    \State Reintroduce the integrality constraints of $\boldsymbol{MP}(C)$, $\mathrm{Phase} \gets 2$, \ $\mathrm{UB} \gets + \infty$. \label{line:phase-transition-end}

    \EndIf

    \State $k \gets k + 1$ 

\EndWhile

\State Let $y^\ast$ and $z_{\mathrm{na}}$ be the optimal solution and objective value of the correction LP described in Section~\ref{subsec:restore-non-anticipativity}. \label{line:non-anticipativity-correction}

\State \Return $x^\ast, y^\ast, z_{\mathrm{na}} \smallplus \sum_{\omega \in \Omega} \Pi_\omega \, c_\omega^\top \ x^\ast_\omega$
\end{algorithmic}
\end{algorithm}

\subsubsection{Separation of Valid Inequalities}
\label{subsec:separation-valid-inequalities}

To accelerate the convergence of the Benders decomposition algorithm, we incorporate the valid inequalities in~\eqref{eq:validineq10} in the master problem. By projecting necessary operational-feasibility conditions onto the investment space, these inequalities enable the master problem to anticipate operational constraints. These cuts thereby tighten the feasible region of the master problem by eliminating operationally infeasible investment decisions, reducing the number of Benders iterations required for convergence.

Adding all valid inequalities to the master problem 
is computationally impractical, as one such inequality is defined for each combination of a scenario path, an energy category, and a sub-period pair. To avoid incorporating this large set of inequalities upfront, we employ a separation procedure within the Benders algorithm. At each iteration, given the current master problem solution~$x^{(k)}$, we solve a separation problem for each energy category and {infeasible} subproblem to identify the most violated inequalities, which are then added to the master problem alongside the Benders cuts.

Let $\mathcal{X}_\omega$ denote the projection of $\mathcal{X}$ onto the investment variables on the nodes of path $\omega$. For a given scenario path $\omega \in \Omega$ and energy category $r \in \mathcal{R}$, we quantify the violation of inequality~\eqref{eq:validineq10} at the current master problem solution~$x^{(k)}_\omega \in \mathcal{X}_\omega$ using the function:

{\footnotesize
\begin{flalign}
& V_{\omega,r}(x^{(k)}_\omega,\underline{q},\overline{q})
:= \eta^{\smallminus}_{r} \eta_{r} \sum_{s \in \mathcal{S}} \sum_{t \in \mathcal{T}'_{s,[\hat{\kappa}_{\underline{q},T,n_\omega})}} \upsilon_{s,r,t,\hat{\kappa}_{\underline{q},T,n_\omega}} x^{(k)}_{s,t,\mu_{t,n_\omega}} \smallplus \sum_{q=\underline{q}}^{\overline{q}} \sum_{j \in \mathcal{J}} \sum_{t \in \mathcal{T}'_{j,[T)}} \Gamma_{j,r,q,t,T,n_\omega} x^{(k)}_{j,t,\mu_{t,n_\omega}} & \notag \\ 
& \qquad \qquad \qquad \qquad \quad \smallplus \sum_{q=\underline{q}}^{\overline{q}}  \sum_{a \in \mathcal{A}} \sum_{t \in \mathcal{T}'_{a,[T)}} \varsigma_{a,r,t,T} x^{(k)}_{a,t,\mu_{t,n_\omega}} \smallminus \sum_{q=\underline{q}}^{\overline{q}} \mathit{\delta}_{r,q,T}
\end{flalign}
}

\noindent where $n_\omega$ denotes the leaf node associated with scenario path $\omega$. The separation problem then seeks the sub-period pair that maximizes this violation (i.e., minimizes the function value):

{\footnotesize
\begin{flalign}
    \min_{1\le\underline{q}\le\overline{q}\le Q} V_{\omega,r}(x^{(k)}_\omega,\underline{q},\overline{q}) \label{eq:violationminimization}
\end{flalign}
}

If the optimal value of~\eqref{eq:violationminimization} is strictly negative, the 
corresponding inequality~\eqref{eq:validineq10} is violated at the current solution 
and therefore added to the master problem as a  cutting plane. Otherwise, 
no violated inequality of this form exists for the given scenario path and energy 
category, and no cut is generated.

To solve~\eqref{eq:violationminimization}, we construct an array of length~$|\mathcal{Q}|$ whose $q$-th entry stores the value $V_{\omega,r}(x^{(k)}_\omega,q,q)$, the violation associated with sub-period $q \in \mathcal{Q}$. Identifying the most violated sub-period pair $(\underline{q},\overline{q})$ then reduces to finding a contiguous subarray with the minimum sum. As established in Proposition~\ref{prop:kadane}, Kadane's algorithm~\citep{bentley1984programming} solves this problem in linear time.

\begin{proposition}
\label{prop:kadane}
For a sequence $A = (a_1, \dots, a_n) \in \mathbb{R}^n$, Kadane's algorithm identifies a contiguous subarray with the minimum sum in $\mathcal{O}(n)$ time.
\end{proposition}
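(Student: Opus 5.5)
We would prove the statement by the standard dynamic-programming argument behind Kadane's algorithm, adapted to minimum rather than maximum sums. For each index $i \in \{1,\dots,n\}$ define
\[
E_i := \min_{1 \le j \le i} \sum_{\ell=j}^{i} a_\ell ,
\]
the minimum sum over contiguous subarrays that \emph{end} at position $i$, and
\[
M_i := \min_{1 \le j \le j' \le i} \sum_{\ell=j}^{j'} a_\ell ,
\]
the minimum over all nonempty contiguous subarrays of $(a_1,\dots,a_i)$. The quantity sought is $M_n$, and the minimizing pair $(\underline{q},\overline{q})$ is recovered by recording indices.

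The first key step is the recurrence
\[
E_1 = a_1, \qquad E_i = a_i + \min\{0, E_{i-1}\} \quad (i \ge 2).
\]
To prove it, split the subarrays ending at $i$ into two cases. The first case is the singleton $(a_i)$, with sum $a_i$. The second case is a subarray of the form $(a_j,\dots,a_{i-1},a_i)$ with $j \le i-1$. Its sum equals $a_i$ plus the sum of a subarray ending at $i-1$, and every subarray ending at $i-1$ arises this way. The minimum over the second case is therefore $a_i + E_{i-1}$, and taking the minimum of the two cases gives the recurrence.

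The second key step is the recurrence
\[
M_1 = a_1, \qquad M_i = \min\{M_{i-1}, E_i\}.
\]
This holds because any subarray of $(a_1,\dots,a_i)$ either ends at $i$ or is contained in $(a_1,\dots,a_{i-1})$. Both recurrences follow by induction on $i$.

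For reconstructing the minimizer, we maintain a start index $s_i$ for $E_i$. We set $s_i = i$ if $E_{i-1} \ge 0$ and $s_i = s_{i-1}$ otherwise. Whenever $E_i < M_{i-1}$ we update the best pair to $(s_i, i)$. The same case analysis shows, by induction, that these stored indices attain the recorded minima.

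The complexity bound is then immediate. The algorithm makes a single pass over $i = 1,\dots,n$, and each step performs a constant number of additions, comparisons, and assignments. The total running time is therefore $\mathcal{O}(n)$ with $\mathcal{O}(1)$ extra memory.

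In the application, the array entries are $V_{\omega,r}(x^{(k)}_\omega,q,q)$, which gives a complexity of $\mathcal{O}(Q)$ per pair $(\omega,r)$. This assumes that the sum over $[\underline{q},\overline{q}]$ of these entries equals $V_{\omega,r}(x^{(k)}_\omega,\underline{q},\overline{q})$. That additivity is not exact, because the storage term depends on $\underline{q}$ and is not summed over $q$. Making it exact would require a separate prefix-sum argument, and we do not need it, since Proposition~\ref{prop:kadane} itself concerns only a generic array.

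The only delicate point is the correctness of the first recurrence, specifically the claim that the optimal subarray ending at $i$ either restarts at $i$ or extends an optimal subarray ending at $i-1$. The exchange-style case split above settles it. Everything else is bookkeeping.
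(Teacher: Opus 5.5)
Your argument is correct and is essentially the paper's own proof. The paper simply invokes the standard recurrence underlying Kadane's algorithm and defers to Gries (1982), while you write out that recurrence ($E_i = a_i + \min\{0,E_{i-1}\}$, $M_i = \min\{M_{i-1},E_i\}$) together with its inductive justification, the index bookkeeping, and the single-pass $\mathcal{O}(n)$ count.

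Your aside about the application is also accurate, though it does not affect the proposition. Summing the entries $V_{\omega,r}(x^{(k)}_\omega,q,q)$ over $[\underline{q},\overline{q}]$ counts the nonnegative storage term once per sub-period rather than once, so the separation in Algorithm~\ref{alg:separationkadane} only guarantees a violated inequality, not necessarily the most violated one. Using storage-free entries $a_q$ with the shifted recurrence $E_q = a_q + \min\{S_q, E_{q-1}\}$, where $S_q$ is the storage term, would recover the exact minimizer in linear time.
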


\begin{proof}
Correctness and linear time complexity follow directly from the standard recurrence relation underlying Kadane's algorithm; for a detailed discussion, see \citet{gries1982note}.
\end{proof}

For a given scenario path, solving~\eqref{eq:violationminimization} across all energy categories and separating the most violated inequality requires $\mathcal{O}(|\mathcal{R}|  \cdot |\mathcal{Q}|)$ time. Algorithm~\ref{alg:separationkadane} details the complete procedure and generates the corresponding valid cuts.

\begin{algorithm}[H]
\caption{Separation of valid inequalities using Kadane's algorithm}
\footnotesize
\label{alg:separationkadane}
\begin{algorithmic}[1]
\Require Functions $V_{\omega,r} : \{(x_\omega,\underline{q},\overline{q}) \in \mathcal{X}_\omega \!\times\! \mathcal{Q}^2 \mid \underline{q} \le \overline{q}\} \to \mathbb{R}$ for $\forall r \in \mathcal{R}$ with fixed $\omega \in \Omega$, $x_\omega^{(k)}$, $\mathcal{R}$, $\mathcal{Q}$, and tolerance $\varepsilon_{V} \smallge 0$.
\Ensure Set of violated valid inequalities $C_v$.

\Function{SeparationKadane}{$V_{\omega,r}, x_\omega^{(k)}, \mathcal{R}, \mathcal{Q}$}

\State $C_v \gets \emptyset$

\ForAll{$r \in \mathcal{R}$}
    \State $\text{currentSum} \gets V_{\omega,r}(x_\omega^{(k)},1,1), \ \underline{q}_{\mathrm{curr}} \gets 1$ \Comment{\footnotesize Local minimum sum ending at $q$ and its start index}
    
    \State $\text{bestSum} \gets \text{currentSum}$, \ $\underline{q}^* \gets 1$, \ $\overline{q}^* \gets 1$ \Comment{\footnotesize Global minimum sum over $[1, q]$ and its boundaries}
    
    \ForAll{$q \in \{2,\ldots,Q\}$}
        \If{$\text{currentSum} > 0$}
            \State $\text{currentSum} \gets V_{\omega,r}(x_\omega^{(k)},q,q)$, \ $\underline{q}_{\mathrm{curr}} \gets q$ \Comment{\footnotesize Initiate a new locally optimal subarray starting at $q$}
        \Else
            \State $\text{currentSum} \gets \text{currentSum} + V_{\omega,r}(x_\omega^{(k)},q,q)$ \Comment{\footnotesize Extend the previous subarray}
        \EndIf
    
        \If{$\text{currentSum} < \text{bestSum}$}
            \State $\text{bestSum} \gets \text{currentSum}$, \ $\underline{q}^* \gets \underline{q}_{\mathrm{curr}}$, \ $\overline{q}^* \gets q$ \Comment{\footnotesize Update global minimum sum}
        \EndIf
    \EndFor
    \If{$\text{bestSum} < \smallminus \varepsilon_{V}$}
    \State $C_v \gets C_v \cup \{V_{\omega,r}(x_\omega,\underline{q}^*,\overline{q}^*) \ge 0\}$
    \EndIf
\EndFor

\State \Return $C_v$

\EndFunction
\end{algorithmic}
\end{algorithm}

\subsubsection{Restoring Non-anticipativity}
\label{subsec:restore-non-anticipativity}

Upon termination, Algorithm~\ref{alg:benders-decomposition} returns an incumbent $(x^\ast,y^\ast)$ of the relaxed problem~\eqref{eq:relaxed-mssp}, where $x^\ast$ is non-anticipative by construction but $y^\ast$ need not be. We restore non-anticipativity in a correction step that fixes $x$ at $x^\ast$ and re-optimizes $y$ by solving an LP subject to the constraints of the original model~\eqref{eq:stochasticModel} and the non-anticipativity constraints. We first show that, under a mild assumption on the stage structure, such an incumbent always admits a non-anticipative solution at $x^\ast$ that is feasible for the original model and whose objective function value exceeds that of $(x^\ast,y^\ast)$ by a bounded amount. To this end, let $\underline t_n$ and $\overline t_n$ denote the first and last periods of node $n \in \mathcal{N}$, and let $\overline \mu_n$ denote the set of immediate children of $n$ in the scenario tree.

\begin{proposition}
\label{prop:na-feasible}
Let $(x_\omega^\ast,y_\omega^\ast)_{\omega\in\Omega}$ be an incumbent of the non-anticipativity-relaxed problem~\eqref{eq:relaxed-mssp}, and assume that each node in the final stage consists of more than one period. Then, there exists a non-anticipative solution $(x_\omega^\ast,\hat y_\omega)_{\omega\in\Omega}$ of~\eqref{eq:stochasticModel} whose objective function value exceeds that of $(x_\omega^\ast,y_\omega^\ast)_{\omega\in\Omega}$ by at most
\begin{equation}
\label{eq:na-increment-bound}
\sum_{r \in \mathcal{R}} \sum_{n \in \mathcal{N}\setminus\mathcal{N}_l} \sum_{n' \in \overline \mu_n} \frac{\pi_{n'}\, \beta^{(\underline{t}_{n'}-1)}\, \lambda_{r,\underline{t}_{n'}}}{\eta^{\smallplus}_{r}} \bigl(\Delta_{r,n,n'}\bigr)^{\!+},
\end{equation}
where $(\cdot)^{+} := \max\{0,\cdot\}$ and $\Delta_{r,n,n'}$ is the residual of the inter-node storage-balance constraints~\eqref{eq:storagebalanceConstr}, evaluated at the node-wise solution $\tilde y$ obtained when these constraints are relaxed:

\begin{equation}
\label{eq:na-residual}
\Delta_{r,n,n'} := \tilde h_{r,1,\underline{t}_{n'},n'} \smallminus \eta_{r}\, \tilde h_{r,Q,\overline{t}_n,n} \smallminus \eta^{\smallplus}_{r}\, \tilde h_{r,1,\underline{t}_{n'},n'}^{\smallplus} \smallplus \frac{\tilde h_{r,1,\underline{t}_{n'},n'}^{\smallminus}}{\eta^{\smallminus}_{r}}.
\end{equation}
\end{proposition}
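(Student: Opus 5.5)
We would construct $\hat y$ explicitly in three steps. First, we build a node-wise solution $\tilde y$ from the path-wise incumbent $y^\ast$. Next, we observe that $\tilde y$ violates only the inter-node storage-balance constraints~\eqref{eq:storagebalanceConstr}, namely those at the first sub-period of each child node $n'$, whose predecessor $\kappa_{1,\underline t_{n'},n'}$ lies in the parent $n$. Finally, we repair these violations locally, using exogenous purchases in the first sub-period of each child node.

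\textbf{Step 1: from paths to nodes.} For every node $n$, pick a representative path $\omega(n)$ passing through $n$. Choose it so that its expected operational cost restricted to node $n$ is minimal among the paths through $n$, or simply any path if we compare against the probability-weighted average. Set $\tilde y$ at node $n$ equal to $y^\ast_{\omega(n)}$ restricted to the periods $t\in\mathcal T_n$. Because $x^\ast$ is non-anticipative, every constraint of~\eqref{eq:stochasticModel} that lives entirely inside node $n$ is satisfied by $\tilde y$. This covers~\eqref{eq:demandConstr},~\eqref{eq:demandConstr2},~\eqref{eq:storagecapacityConstr},~\eqref{eq:heattransferConstr}, and the intra-node instances of~\eqref{eq:storagebalanceConstr}. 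The emission constraint~\eqref{eq:emissionConstr} involves only leaf nodes, and leaves are in one-to-one correspondence with paths, so it also holds.

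For the objective, the node-$n$ cost of $\tilde y$ is $\pi_n$ times the per-path cost of $\omega(n)$ at $n$. Writing the relaxed objective as $\sum_n\sum_{\omega\ni n}\Pi_\omega(\cdot)$ and choosing $\omega(n)$ as the minimizer shows that the cost of $\tilde y$ is at most that of $y^\ast$.

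\textbf{Step 2: repairing the inter-node storage balance.} For each edge $(n,n')$ the only violated equality is~\eqref{eq:storagebalanceConstr} at $(r,1,\underline t_{n'},n')$, with residual $\Delta_{r,n,n'}$ as in~\eqref{eq:na-residual}. We keep the stored level $\tilde h_{r,1,\underline t_{n'},n'}$ fixed, so that every later sub-period is unaffected and no downstream propagation occurs.

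If $\Delta_{r,n,n'}\le 0$, the child inherits more energy than it needs. We absorb the surplus by increasing $h^{\smallminus}_{r,1,\underline t_{n'},n'}$ by $\eta^{\smallminus}_r(-\Delta)$ and raising $o$ or simply leaving it, since~\eqref{eq:demandConstr2} is of the ``$\ge$'' type and extra discharge only adds slack. This costs nothing.

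If $\Delta_{r,n,n'}>0$, the child is short of stored energy. We increase the charge $h^{\smallplus}_{r,1,\underline t_{n'},n'}$ by $\Delta/\eta^{\smallplus}_r$. To keep~\eqref{eq:demandConstr2} satisfied, we simultaneously increase $p_{r,1,\underline t_{n'},n'}$ by the same amount. The resulting cost increase is $\pi_{n'}\beta^{(\underline t_{n'}-1)}\lambda_{r,\underline t_{n'}}\Delta/\eta^{\smallplus}_r$. Summing over $r$ and all edges gives exactly~\eqref{eq:na-increment-bound}. The $h$ levels are unchanged, so the capacity constraints~\eqref{eq:storagecapacityConstr} are untouched.

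\textbf{Main obstacle.} The delicate point is the emission constraint~\eqref{eq:emissionConstr}. The added purchases $p$ must not break it, in particular when $\gamma_r=0$. This is precisely where the assumption that each final-stage node spans more than one period is used. Because of it, $\underline t_{n'}<T$ for every leaf $n'$, so all corrective purchases occur in periods before $T$, where~\eqref{eq:emissionConstr} imposes nothing. For non-leaf children, the first period is trivially before $T$.

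A secondary check is that adding both $h^{\smallplus}$ and $h^{\smallminus}$ in the same sub-period is allowed by the model; no complementarity is imposed, so it is. We also need that the heat-transfer restriction via $o$ is preserved, which holds because $o$ is unchanged. The correction LP of Algorithm~\ref{alg:benders-decomposition} then optimizes over a set containing $\hat y$, so its value obeys the same bound.
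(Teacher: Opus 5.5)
Your proof is correct and follows the paper's argument essentially step for step. Both build a non-anticipative node-wise $\tilde y$ whose cost does not exceed the incumbent's, and both repair only the first sub-period of each child $n'$ while keeping the stored level fixed: extra charging matched by an equal extra purchase when $\Delta_{r,n,n'}>0$, extra discharge when $\Delta_{r,n,n'}<0$. Both also invoke the multi-period final-stage assumption to keep every corrective purchase out of period $T$.

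The one difference is how $\tilde y$ is obtained. The paper takes $\tilde y$ to be an optimal solution of the node-wise subproblems, and that is the $\tilde y$ at which the statement's $\Delta_{r,n,n'}$ is evaluated. You instead use the restriction of the cheapest path's incumbent. Your restriction is feasible for the node-wise subproblem, so the optimum also costs no more than $y^\ast$, and your Step 2 therefore applies verbatim to the paper's $\tilde y$.
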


\begin{proof}
Within $\boldsymbol{SP}_\omega(x_\omega^\ast)$, consecutive nodes along path $\omega$ are coupled through the storage-balance constraints~\eqref{eq:storagebalanceConstr}---one per energy category---which relate the terminal sub-period of each node to the first sub-period of its successor. Relaxing these inter-node constraints decomposes $\boldsymbol{SP}_\omega(x_\omega^\ast)$ into node-wise subproblems $\boldsymbol{\widetilde{SP}}_{\omega,i}(x_\omega^\ast)$, one per stage $i \in \Psi$, while retaining the within-node storage balances.

Consider two scenario paths $\omega_1,\omega_2\in\Omega$ that pass through a common node $n \in \mathcal{N}$. The node-wise subproblems $\boldsymbol{\widetilde{SP}}_{\omega_1, \psi_n}(x_{\omega_1}^\ast)$ and $\boldsymbol{\widetilde{SP}}_{\omega_2, \psi_n}(x_{\omega_2}^\ast)$ have identical feasible regions and objective coefficients: their investment decisions agree because $x^\ast$ is non-anticipative on all shared nodes, and their stochastic parameters agree because both are evaluated at $n$. The node-wise subproblem at $n$ is therefore independent of the path containing $n$, so we may discard the duplicates and associate a single subproblem $\boldsymbol{\widetilde{SP}}_{n}(x_n)$ with each node $n \in \mathcal{N}$, where $x_n$ is the common investment decision at $n$ and its ancestors. Any non-anticipativity violation in $y^\ast$ at a shared node is thus attributable solely to the relaxed inter-node storage-balance constraints.

Each $\boldsymbol{\widetilde{SP}}_{n}(x_n)$ is feasible: it is obtained by relaxing the inter-node constraints of $\boldsymbol{SP}_\omega(x_\omega^\ast)$, which is itself feasible because $(x_\omega^\ast,y_\omega^\ast)_{\omega\in\Omega}$ is feasible for~\eqref{eq:relaxed-mssp}. Let $\tilde y$ denote the non-anticipative solution obtained by solving these node-wise subproblems; it satisfies every constraint of~\eqref{eq:stochasticModel} except, possibly, the relaxed inter-node storage-balance constraints. Because each $\boldsymbol{\widetilde{SP}}_{n}(x_n)$ relaxes the inter-node coupling of the corresponding per-path subproblem, the restriction of $y^\ast$ to node $n$ is feasible for $\boldsymbol{\widetilde{SP}}_{n}(x_n)$; the node-wise optimum at $n$ therefore costs no more than $y^\ast$ does at $n$, and summing over all nodes shows that the objective of $\tilde y$ is no greater than that of $y^\ast$.

It remains to reinstate the inter-node constraints while preserving non-anticipativity and feasibility. Consider a non-leaf node $n \in \mathcal{N}\setminus\mathcal{N}_l$ and a child $n' \in \overline \mu_n$. The reinstated constraint couples the terminal storage $h_{r,Q,\overline{t}_n,n}$ of $n$ to the first sub-period of $n'$ through
\begin{equation}
h_{r,1,\underline{t}_{n'},n'} = \eta_{r}\, h_{r,Q,\overline{t}_n,n} \smallplus \eta^{\smallplus}_{r}\, h_{r,1,\underline{t}_{n'},n'}^{\smallplus} \smallminus \frac{h_{r,1,\underline{t}_{n'},n'}^{\smallminus}}{\eta^{\smallminus}_{r}}.
\end{equation}
Evaluated at the node-wise solution $\tilde y$, this equation need not hold; its residual $\Delta_{r,n,n'}$ is defined as in~\eqref{eq:na-residual}. A positive residual indicates that the first sub-period of $n'$ draws more stored energy than its inputs supply. We restore the balance at $n'$ itself, leaving the storage carried over from $n$ untouched: we raise the charging from $\tilde h_{r,1,\underline{t}_{n'},n'}^{\smallplus}$ to $\hat h_{r,1,\underline{t}_{n'},n'}^{\smallplus} := \tilde h_{r,1,\underline{t}_{n'},n'}^{\smallplus} \smallplus \Delta_{r,n,n'}/\eta^{\smallplus}_{r}$ and meet it with an equal increase in the exogenous purchase, $\hat p_{r,1,\underline{t}_{n'},n'} := \tilde p_{r,1,\underline{t}_{n'},n'} \smallplus \Delta_{r,n,n'}/\eta^{\smallplus}_{r}$, in the first sub-period of $n'$. Because the purchase and the charging rise by the same amount, the demand constraints~\eqref{eq:demandConstr2} are unaffected; and because the stored level $\hat h_{r,1,\underline{t}_{n'},n'} = \tilde h_{r,1,\underline{t}_{n'},n'}$ is left unchanged, so is the remainder of the storage trajectory at $n'$. A negative residual is handled symmetrically by raising the discharge $\hat h_{r,1,\underline{t}_{n'},n'}^{\smallminus}$ above $\tilde h_{r,1,\underline{t}_{n'},n'}^{\smallminus}$, which disposes of the surplus carried-over energy and requires no purchase. Every remaining component of $\hat y$ coincides with that of $\tilde y$.

This correction preserves feasibility. No storage level is increased anywhere in the tree, so the storage-capacity constraints~\eqref{eq:storagecapacityConstr} remain satisfied. Moreover, every corrective purchase lies in the first sub-period of a node: because each node in the final stage consists of more than one period, the first period $\underline{t}_{n'}$ of every node $n'$---including each leaf node---satisfies $\underline{t}_{n'} < T$. Since the emission constraints~\eqref{eq:emissionConstr} restrict exogenous purchases only in the final period $T$ of the leaf nodes, these corrective purchases are unrestricted. The corrected solution $\hat y$ is therefore non-anticipative and feasible for~\eqref{eq:stochasticModel}.

The corrected solution $\hat y$ differs from the node-wise solution $\tilde y$ only in these additional purchases, which offset the positive residuals. Restoring the balance at a child $n'$ raises the purchase by $\bigl(\Delta_{r,n,n'}\bigr)^{+}/\eta^{\smallplus}_{r}$, with the charging efficiency $\eta^{\smallplus}_{r}$ relating the purchased energy to the amount actually stored. Weighting each such purchase by its discounted unit cost $\pi_{n'}\,\beta^{(\underline{t}_{n'}-1)}\,\lambda_{r,\underline{t}_{n'}}$ and summing over all energy categories, non-leaf nodes, and their children bounds the objective increase of $\hat y$ over $\tilde y$ by~\eqref{eq:na-increment-bound}. Because the objective of $\tilde y$ is no greater than that of $y^\ast$, the objective of $\hat y$ exceeds that of $(x_\omega^\ast,y_\omega^\ast)_{\omega\in\Omega}$ by at most~\eqref{eq:na-increment-bound}, as claimed.
\end{proof}

Rather than solving the node-wise problems and carrying out the construction described above, we recover non-anticipativity by solving a single LP. Because leaf nodes are not shared across scenario paths, the incumbent operational decisions $y^\ast$ are already non-anticipative there, and only the internal nodes can exhibit violations. The LP fixes the investments at $x^\ast$ and the leaf-node operational decisions at $y^\ast$---except those in each leaf's first sub-period, which links the leaf to its parent node---and re-optimizes the remaining operational variables subject to the constraints of~\eqref{eq:stochasticModel} and non-anticipativity. Because the non-anticipative solution constructed in Proposition~\ref{prop:na-feasible} is feasible for this LP, the LP attains an objective that exceeds the incumbent's by at most~\eqref{eq:na-increment-bound}, a finite quantity since the storage installations---fixed at $x^\ast$ and bounded in~\eqref{eq:VarDomain1}---bound the storage variables through the storage-capacity constraints~\eqref{eq:storagecapacityConstr}. In Section~\ref{subsec:comp-performance}, we  empirically observe that the correction step does not increase the objective value in any instance.

\subsection{Out-of-Sample Performance Evaluation via Rolling-Horizon Simulation}
\label{subsec:rolling-horizon-sim}

The optimization model~\eqref{eq:stochasticModel} is solved with nominal profiles for energy demand, renewable generation, and the heat pump coefficient of performance. In practice, these quantities may deviate from their nominal values. To assess how the resulting investment plans perform under such operational uncertainty, we conduct Monte Carlo simulations with realizations sampled from the distributions described in Section~\ref{subsec:operational-uncertain-data}. Each realization is generated by adding either white- or pink-noise deviations to the nominal profiles, with pink noise producing temporally correlated deviations~\citep{Guevara2022}. Given an investment plan, the dispatch decisions within a single replication are governed by the following problem:

\begin{equation}
\label{eq:monte-carlo-opt}
\text{minimize} \left\{ \sum_{n \in \mathcal{N}\ssetminussobj \zeroobj} \sum_{t \in \mathcal{T}_n} \sum_{r \in \mathcal{R}} \sum_{q \in \mathcal{Q}} \pi_n \beta^{(t-1)} \lambda_{r,t} \ p_{r,q,t,n} : \eqref{eq:demandConstr}, \eqref{eq:demandConstr2}, \eqref{eq:storagebalanceConstr}, \eqref{eq:storagecapacityConstr}, \eqref{eq:heattransferConstr}, \eqref{eq:VarDomain2}, \eqref{eq:VarDomain3} \right\}.
\end{equation}

\noindent
Here,  parameters $\Gamma_{j,r,q,t,t',n}$, $\mathbbm{1}_{a,r,q,t,t',n}$, and $\mathit{\delta}_{r,q,t'}$ in constraints~\eqref{eq:demandConstr} and~\eqref{eq:demandConstr2} are set to their sampled values. A complete simulation comprises many such replications, each under an independently sampled~realization.

Solving problem~\eqref{eq:monte-carlo-opt} over the full horizon at once would assume perfect foresight of every future realization, whereas operational decisions are in fact made sequentially as uncertainty is revealed. To respect this information structure, we evaluate each replication through a sequence of LP dispatch problems solved over rolling windows that together span the planning horizon. Each window covers $\hat{l} \in \mathbb{Z}_{+}$ consecutive sub-periods: the first $\bar{l} \in \mathbb{Z}_{+}$ of these, with $\bar{l} \smallle \hat{l}$, take their sampled realizations, while the remaining sub-periods retain their nominal values and serve as a planning lookahead. We solve the window's dispatch problem, record the decisions of its first sub-period, and advance the window by one sub-period. The procedure repeats until the window has traversed every node and sub-period of the scenario tree.

To break ties among alternative optima, we assign a small negative objective coefficient to the stored-energy variables $h_{r,q,t,n}$ in the final sub-period of each window; this rewards carrying forward as much stored energy as possible without altering the primary objective. Emission constraints are omitted from the dispatch problems; any resulting net-zero shortfall is instead recorded and reported as the percentage of final-period demand met by exogenous purchases.

In the dispatch problems described so far, the lookahead sub-periods take nominal values. To make the dispatch decisions themselves robust to unfavorable conditions over the lookahead, we additionally construct a robust dispatch problem that mirrors the robust reformulation of Section~\ref{subsec:robust-reformulation}. Within each window, the first~$\bar{l}$ sub-periods retain their sampled realizations, while the lookahead sub-periods $(\bar{l} \smallplus 1, \ldots, \hat{l})$ are displaced to the adverse end of their uncertainty intervals at robustness level $\epsilon$. This provides a dispatch-side hedge that complements the robustness already embedded in the investment plans. In the Monte Carlo simulations, we evaluate both the nominal and the robust investment plans, dispatching each under both the nominal and the robust dispatch problems.

\section{Computational Study}
\label{sec:computations}

This section details the computational study of the proposed framework. Section~\ref{subsec:experimental-setup} describes the campus-scale instance and the computational environment, Section~\ref{subsec:comp-performance} evaluates the computational performance against the benchmark methods, and Section~\ref{subsec:case-study} presents the case study for a university campus.

\subsection{Experimental Setup}
\label{subsec:experimental-setup}

All computational experiments are conducted on a 64-bit Apple Silicon MacBook Pro running macOS Tahoe 26.2, equipped with an Apple M3 Pro processor (6 performance and 6 efficiency cores) and 36 GB of unified memory. The algorithms are implemented in Python 3.13.6, and all optimization problems are solved using Gurobi Optimizer 13.0.2 via its Python interface. The source code developed for this study is publicly available online~\citep{stochastic-campus-energy-transition}.

The experiments are based on a clean electricity--heat transition planning instance constructed for the METU campus. In accordance with METU's net-zero target for 2040, the study spans the 2026--2040 horizon. Exogenous energy procurement is therefore prohibited in the final period by setting $\gamma_{r} \smalleq 0$ for $r \in \mathcal{R}$. The planning horizon of $T \smalleq 15$ years is divided equally into $|\Psi| \smalleq 3$ stages of five years (periods), and each year is partitioned into sub-periods that aggregate two hours, yielding $|\mathcal{Q}| \smalleq 4{,}368$ sub-periods per year. An annual budget of \$20 million (M) is imposed in all periods.

Figure~\ref{fig:energy-system} presents a schematic of the integrated electricity--heat system formed by the candidate technologies considered for the transition. Photovoltaic panels and wind turbines are the electricity generation technologies, whereas parabolic troughs provide heat generation. Heat pumps serve as heat-transfer technologies that deliver heat using electricity, and their conversion efficiency is characterized by a coefficient of performance defined for each sub-period. Energy is stored by lithium-ion batteries on the electricity side and by hot water tanks on the heat side. Among these $|\mathcal{U}| \smalleq 6$ technologies, photovoltaic panels and lithium-ion batteries are subject to stochastic technological advancement, each with two advancement clusters, while the remaining four are treated as either mature or deterministically progressing. Two versions of photovoltaic panels with different capacities are considered, both following the same technological progress. Table~\ref{tab:VersionsTable} details the parameters of these candidate technologies.

\sankeyset{
  new end style={bigarrow}{
    ([xshift=-0.2mm]\name.left) -- ([xshift=-0.2mm,yshift=1.6mm]\name.left)
    -- ([xshift=3.5mm]\name.center) -- ([xshift=-0.2mm,yshift=-1.6mm]\name.right)
    -- ([xshift=-0.2mm]\name.right) -- cycle
  }{
    ([xshift=-0.2mm]\name.left) -- ([xshift=-0.2mm,yshift=1.6mm]\name.left)
    -- ([xshift=3.5mm]\name.center) -- ([xshift=-0.2mm,yshift=-1.6mm]\name.right)
    -- ([xshift=-0.2mm]\name.right)
  },
}

\definecolor{elecGold}{RGB}{242,183,5}
\definecolor{heatRed}{RGB}{224,58,46}
\definecolor{windBlue}{RGB}{45,130,210}
\definecolor{solarOrange}{RGB}{240,140,30}

\begin{figure}[H]
\centering
\scalebox{0.85}{
\begin{tikzpicture}[
font=\footnotesize,
tech/.style = {draw, rounded corners=3pt, line width=1pt, fill=white,
               minimum width=2.5cm, minimum height=1cm, align=center,
               inner sep=2pt, font=\footnotesize\linespread{0.8}\selectfont},
output/.style = {signal, signal to=east and west, signal pointer angle=110,
               draw, line width=1pt, fill=white,
               minimum width=2.5cm, minimum height=1cm, align=center,
               inner sep=2pt, font=\footnotesize\linespread{0.8}\selectfont},
input/.style = {draw, rounded corners=10pt, line width=1pt, fill=white,
               minimum width=2.5cm, minimum height=1cm, align=center,
               inner sep=2pt, font=\footnotesize\linespread{0.8}\selectfont}
]
\begin{sankeydiagram}[start style=simple, end style=bigarrow, draw/.style={draw=none}, ratio=1cm/3]

\sankeyset{fill/.style={fill=elecGold}}
\sankeynodestart{name=a,at={4,4.75},angle=270,quantity=0.7}
\sankeyadvance{a}{0.3cm}
\sankeynodeend{as=a}

\sankeyset{fill/.style={fill=elecGold}}
\sankeynodestart{name=a,at={4,3.25},angle=90,quantity=0.7}
\sankeyadvance{a}{0.3cm}
\sankeynodeend{as=a}

\sankeyset{fill/.style={fill=elecGold}}
\sankeynodestart{name=a,at={7.3,4},angle=90,quantity=0.7}
\sankeyadvance{a}{0.4cm}
\sankeynodeend{as=a}

\sankeyset{fill/.style={fill=elecGold}}
\sankeynodestart{name=a,at={8,4.75},angle=270,quantity=0.7}
\sankeyadvance{a}{0.3cm}
\sankeynodeend{as=a}

\sankeyset{fill/.style={fill=heatRed}}
\sankeynodestart{name=a,at={7.3,0},angle=90,quantity=0.7}
\sankeyadvance{a}{0.4cm}
\sankeynodeend{as=a}

\sankeyset{fill/.style={fill=heatRed}}
\sankeynodestart{name=a,at={8,0.75},angle=270,quantity=0.7}
\sankeyadvance{a}{0.3cm}
\sankeynodeend{as=a}

\sankeyset{fill/.style={fill=elecGold}}
\sankeynodestart{name=a,at={10,4},angle=270,quantity=0.7}
\sankeyadvance{a}{0.4cm}
\sankeynodeend{as=a}

\sankeyset{fill/.style={fill=heatRed}}
\sankeynodestart{name=a,at={10,2.25},angle=270,quantity=0.7}
\sankeyadvance{a}{1.8cm}
\sankeynodeend{as=a}

\sankeyset{fill/.style={fill=solarOrange}}
\sankeynodestart{name=a,at={1.25,2},quantity=0.7}
\sankeynodeend{name=b,at={2.4,2.75},quantity=0.7}
\sankeydubins[minimum radius=4mm]{a}{b}

\sankeyset{fill/.style={fill=solarOrange}}
\sankeynodestart{name=a,at={1.25,2},quantity=0.7}
\sankeynodeend{name=b,at={2.4,1.25},quantity=0.7}
\sankeydubins[minimum radius=4mm]{a}{b}

\sankeyset{fill/.style={fill=heatRed}}
\sankeynodestart{name=a,at={4,0.75},angle=270,quantity=0.7}
\sankeyadvance{a}{0.3cm}
\sankeynodeend{as=a}

\sankeyset{fill/.style={fill=windBlue}}
\sankeynodestart{name=a,at={1.25,5.25},quantity=0.7}
\sankeyadvance{a}{1.15cm}
\sankeynodeend{as=a}

\sankeyset{fill/.style={fill=elecGold}}
\sankeynodestart{name=a,at={1.25,4},quantity=0.7}
\sankeyadvance{a}{9.9cm}
\sankeynodeend{as=a}

\sankeyset{fill/.style={fill=heatRed}}
\sankeynodestart{name=a,at={1.25,0},quantity=0.7}
\sankeyadvance{a}{9.9cm}
\sankeynodeend{as=a}

\node[input, draw=windBlue]    at (0,5.25)   {Wind input};
\node[input, draw=elecGold]    at (0,4)      {Exogenous \\ electricity};
\node[input, draw=solarOrange] at (0,2)      {Solar input};
\node[input, draw=heatRed]     at (0,0)      {Exogenous \\ heat};
\node[tech, draw=elecGold]    at (4,5.25)   {Wind turbine};
\node[tech, draw=elecGold]    at (4,2.75)   {PV panels};
\node[tech, draw=heatRed]     at (4,1.25)   {Parabolic \\ trough};
\node[tech, draw=elecGold]    at (7.7,5.25) {Lithium-ion \\ battery};
\node[tech, draw=heatRed]     at (7.7,1.25) {Hot water \\ tank};
\node[tech, draw=heatRed]     at (10,2.75)  {Heat pump};
\node[output, draw=elecGold]  at (12.75,4)  {Electricity \\ demand};
\node[output, draw=heatRed]   at (12.75,0)  {Heat \\ demand};
\end{sankeydiagram}

\begin{scope}[font=\footnotesize]
  \path (current bounding box.south) ++(0,-0.90) coordinate (legend-center);
  \begin{scope}[shift={(legend-center)}]
    \draw[rounded corners=2pt, draw=black!45, line width=0.5pt]
      (-5.15,0.35) rectangle (5.15,-0.35);
    \foreach \x/\legendcolor/\legendlabel in {
      -3.75/elecGold/Electricity,
      -1.25/heatRed/Heat,
       1.25/windBlue/Wind,
       3.75/solarOrange/Solar
    }{
      \node[anchor=center, inner sep=0pt, minimum width=2.2cm] at (\x,0) {%
        {\setlength{\fboxsep}{0pt}\setlength{\fboxrule}{0.3pt}%
        \fcolorbox{black!55}{\legendcolor}{\makebox[0.62cm]{\rule{0pt}{0.22cm}}}}%
        \hspace{0.15cm}\legendlabel%
      };
    }
  \end{scope}
\end{scope}
\end{tikzpicture}
}
\caption{Schematic of the integrated electricity--heat system.}
\label{fig:energy-system}
\end{figure}

\vspace{-0.2cm}
The resulting scenario tree comprises $|\mathcal{N}| \smalleq 22$ nodes and $|\Omega| \smalleq 16$ scenario paths. In total, the corresponding optimization problem contains 9,435,001 constraints and 10,352,904 decision variables, of which 742 are investment variables and 318 are integer. The complete dataset used in the experiments is detailed in \ref{sec:appendixA}.

\subsection{Computational Performance}
\label{subsec:comp-performance}

We evaluate the computational performance of the solution methodology proposed in Section~\ref{subsec:benders-decomposition} on the instance described in Section~\ref{subsec:experimental-setup} at four operational temporal resolutions, defined by sub-periods that aggregate 24, 8, 4, and 2 hours. Each instance is solved to a relative optimality gap of 1\%, subject to a time limit of 4 hours.

\subsubsection{Linear Programming Relaxation}
\label{subsubsec:lp-relaxation}

To demonstrate the effect of our algorithmic design choices on solution time, we first solve the LP relaxation of model~\eqref{eq:stochasticModel} with different methods. Table~\ref{tab:lp-relaxation-results} summarizes the performance of four approaches applied to the LP relaxation of model~\eqref{eq:stochasticModel} across the considered temporal resolutions. For each method, it reports the best lower and upper bounds ($\mathrm{LB}$ and $\mathrm{UB}$) on the objective value in millions of dollars, together with the resulting relative optimality gap (Gap) as a percentage. Solution times are given in seconds: a single total time (Time) for the \textit{Extensive Form}, and the cumulative master-problem (Mstr.), subproblem (Sub.), and total (Total) times for each decomposition method. The four methods are defined as follows:  \textit{Extensive Form} solves the LP relaxation directly. \textit{Algorithm~\ref{alg:benders-decomposition}} is the full decomposition algorithm, including both the valid inequalities (VIs) and the two-phase cut-addition strategy. \textit{Algorithm~\ref{alg:benders-decomposition} without VIs} differs from \textit{Algorithm~\ref{alg:benders-decomposition}} only in that line~\ref{line:valid-inequality} is removed, so that no valid inequalities are added to the master problem. \textit{Single-Phase Algorithm~\ref{alg:benders-decomposition}} is obtained from \textit{Algorithm~\ref{alg:benders-decomposition}} by removing lines~\ref{line:phase-transition-start}--\ref{line:phase-transition-end}, so that feasibility and optimality cuts are added together at every iteration without a phase transition. When Algorithm~\ref{alg:benders-decomposition} and its variants are applied to the LP relaxation, the integrality constraints are not reintroduced at the transition to Phase~2, so the two phases differ only in their cut-addition strategy.

\begin{table}[H]
\centering
\setlength{\tabcolsep}{2.5pt}
\renewcommand{\arraystretch}{1.3}
\caption{Comparison of solution methods for the LP relaxation of model~\eqref{eq:stochasticModel} across temporal resolutions}
\label{tab:lp-relaxation-results}
\resizebox{\textwidth}{!}{
\begin{tabular}{l rrrr @{\hskip 8pt} rrrrrr @{\hskip 8pt} rrrrrr @{\hskip 8pt} rrrrrr}
\toprule
& \multicolumn{4}{c@{\hskip 8pt}}{\textbf{Extensive Form}}
& \multicolumn{6}{c@{\hskip 8pt}}{\textbf{Single-Phase Algorithm~\ref{alg:benders-decomposition}}}
& \multicolumn{6}{c@{\hskip 8pt}}{\textbf{Algorithm~\ref{alg:benders-decomposition} without VIs}}
& \multicolumn{6}{c}{\textbf{Algorithm~\ref{alg:benders-decomposition}}} \\
\cmidrule(lr){2-5}
\cmidrule(lr){6-11}
\cmidrule(lr){12-17}
\cmidrule(lr){18-23}
& Time & $\mathrm{LB}$ & $\mathrm{UB}$ & Gap
& Mstr. & Sub. & Total & $\mathrm{LB}$ & $\mathrm{UB}$ & Gap
& Mstr. & Sub. & Total & $\mathrm{LB}$ & $\mathrm{UB}$ & Gap
& Mstr. & Sub. & Total & $\mathrm{LB}$ & $\mathrm{UB}$ & Gap \\
\midrule
24-h & 107 & 122.53 & 122.53 & 0.00 & 2 & 38 & 41 & 122.51 & 122.53 & 0.02 & 2 & 39 & 41 & 122.51 & 122.53 & 0.02 & 2 & 31 & 34 & 122.51 & 122.56 & 0.04 \\
8-h & 3,450 & 155.79 & 155.79 & 0.00 & 17 & 411 & 432 & 155.77 & 155.81 & 0.03 & 8 & 312 & 323 & 155.33 & 156.20 & 0.56 & 8 & 281 & 292 & 155.33 & 156.01 & 0.43 \\
4-h & $14,400^\dagger$ & {--} & {--} & {--} & 57 & 1,588 & 1,656 & 157.69 & 157.69 & 0.00 & 11 & 911 & 927 & 157.06 & 158.18 & 0.71 & 12 & 882 & 901 & 156.93 & 158.08 & 0.73 \\
2-h & $14,400^\dagger$ & {--} & {--} & {--} & 178 & 8,084 & 8,330 & 161.12 & 161.13 & 0.00 & 17 & 3,290 & 3,320 & 160.27 & 161.82 & 0.96 & 23 & 3,710 & 3,757 & 160.32 & 161.77 & 0.89 \\
\bottomrule
\end{tabular}
}
\vspace{2pt}
\begin{minipage}{\linewidth}
\noindent{\footnotesize $^\dagger$Time limit of 14{,}400 seconds (4 hours) reached. Dashes (--) indicate that no bound was available at termination.}
\end{minipage}
\end{table}

\vspace{-0.2cm}
Table~\ref{tab:lp-relaxation-results} shows that, on the LP relaxation, the \textit{Extensive Form} solves only the 24-hour and 8-hour instances to optimality, with solution times markedly longer than those of every decomposition-based variant. On the 4-hour and 2-hour instances, it reaches the time limit without returning any bound. All three variants of Algorithm~\ref{alg:benders-decomposition}, by contrast, solve every instance within the time limit. The proposed decomposition thus substantially outperforms the \textit{Extensive Form}, particularly at the finer resolutions.

Although \textit{Single-Phase Algorithm~\ref{alg:benders-decomposition}} employs the same 1\% optimality gap as a stopping criterion, it consistently terminates with much smaller gaps, stopping 
as soon as it obtains its first incumbent solution. This behavior reflects how upper bounds arise in Benders decomposition: a valid upper bound becomes available only when all scenario-path subproblems are feasible for a given master solution. Because the cuts generated by different subproblems share common master variables, adding feasibility cuts alone would steer the master problem toward solutions that render all subproblems feasible. In the single-phase variant, however, optimality cuts are added at every iteration alongside the feasibility cuts, and they also steer the master solution toward minimizing the objective approximation. The master solution is therefore perturbed more strongly and cannot concentrate on producing solutions that are feasible across all subproblems.

Since iterations at which all subproblems are feasible occur only rarely, the lower bound continues to tighten across the intervening iterations. By the time such an iteration is reached, the lower bound has already converged close to the optimal value, leaving a near-zero gap. In Phase~2 of the two-phase strategy, by contrast, only feasibility cuts are added, so the master problem is guided toward solutions that are feasible across all subproblems, and incumbents are found more frequently. The single-phase variant therefore attains tighter final gaps than the two-phase variants, but at the cost of substantially longer solution times. This underscores the value of the two-phase strategy: Phase~1 promotes rapid lower-bound convergence by adding feasibility and optimality cuts together whereas Phase~2 focuses exclusively on feasibility cuts to find more incumbents.

The contribution of the valid inequalities can be assessed by comparing \textit{Algorithm~\ref{alg:benders-decomposition}} against \textit{Algorithm~\ref{alg:benders-decomposition} without VIs}. Adding the valid inequalities reduces the total solution time at three of the four temporal resolutions, while the 2-hour instance exhibits the opposite behavior. On that instance, Phase~1 runs for 2{,}770 seconds and exits with a lower bound of \$159.6M when the valid inequalities are enabled, against 2{,}227 seconds and a weaker lower bound of \$156.9M when they are excluded. 

Although the valid inequalities accelerate lower-bound convergence, enabling them on the 2-hour instance delays the phase transition, causing the algorithm to spend more time in Phase~1. Once in Phase~2, the algorithm terminates in 978 seconds when the valid inequalities are enabled, compared with 1{,}088 seconds when they are disabled, consistent with the benefit of entering Phase~2 with a tighter lower bound. The longer Phase~1 nevertheless dominates, producing a higher total time. This outcome highlights the pivotal role of the phase-transition criterion (line~\ref{line:phase-transition-start}) in determining the overall solution time: when the transition occurs later, the additional time spent in Phase~1 can outweigh the subsequent savings in Phase~2.

\subsubsection{Mixed-Integer Linear Programming Model}
\label{subsubsec:milp-model}

Table~\ref{tab:benders-milp-results} compares the solution methods for the original MILP model~\eqref{eq:stochasticModel}. These methods mirror those used for the LP relaxation, except that \textit{Classical Benders} replaces the \textit{Single-Phase} variant. \textit{Classical Benders} applies Algorithm~\ref{alg:benders-decomposition} with the same master--subproblem partitioning of the decision variables as in~\eqref{eq:subproblem_primal} and~\eqref{eq:master_problem}, assigning the investment variables to the master problem and the operational variables to the subproblem. Unlike the methodology proposed in Section~\ref{sec:methodology-benders}, it does not apply the partial non-anticipativity relaxation, so the operational variables remain coupled across scenario paths and form a single subproblem rather than the scenario-wise subproblems of the proposed decomposition. The remaining three methods, together with the reported columns and their units, retain the definitions introduced for the LP relaxation.

\begin{table}[H]
\centering
\setlength{\tabcolsep}{2.5pt}
\renewcommand{\arraystretch}{1.3}
\caption{Comparison of solution methods for model~\eqref{eq:stochasticModel} across temporal resolutions}
\label{tab:benders-milp-results}
\resizebox{\textwidth}{!}{
\begin{tabular}{l rrrr @{\hskip 8pt} rrrrrr @{\hskip 8pt} rrrrrr @{\hskip 8pt} rrrrrr}
\toprule
& \multicolumn{4}{c@{\hskip 8pt}}{\textbf{Extensive Form}}
& \multicolumn{6}{c@{\hskip 8pt}}{\textbf{Classical Benders}}
& \multicolumn{6}{c@{\hskip 8pt}}{\textbf{Algorithm~\ref{alg:benders-decomposition} without VIs}}
& \multicolumn{6}{c}{\textbf{Algorithm~\ref{alg:benders-decomposition}}} \\
\cmidrule(lr){2-5}
\cmidrule(lr){6-11}
\cmidrule(lr){12-17}
\cmidrule(lr){18-23}
& Time & $\mathrm{LB}$ & $\mathrm{UB}$ & Gap
& Mstr. & Sub. & Total & $\mathrm{LB}$ & $\mathrm{UB}$ & Gap
& Mstr. & Sub. & Total & $\mathrm{LB}$ & $\mathrm{UB}$ & Gap
& Mstr. & Sub. & Total & $\mathrm{LB}$ & $\mathrm{UB}$ & Gap \\
\midrule
24-h & 1,063 & 122.65 & 123.76 & 0.90 & 1,912 & 1,897 & 3,821 & 122.10 & 123.33 & 1.00 & 41 & 44 & 86 & 122.56 & 123.12 & 0.45 & 73 & 40 & 115 & 122.56 & 123.15 & 0.48 \\
8-h & $14,400^\dagger$ & 155.97 & 158.89 & 1.84 & 2,490 & 11,878 & $14,400^\dagger$ & 151.85 & 159.30 & 4.68 & 367 & 394 & 767 & 155.46 & 156.78 & 0.84 & 266 & 376 & 650 & 155.65 & 156.73 & 0.69 \\
4-h & $14,400^\dagger$ & {--} & 310.57 & {--} & 914 & 13,476 & $14,400^\dagger$ & 128.08 & 167.33 & 23.46 & 385 & 1,256 & 1,654 & 157.36 & 158.94 & 1.00 & 357 & 1,255 & 1,628 & 157.30 & 158.81 & 0.95 \\
2-h & $14,400^\dagger$ & {--} & 312.07 & {--} & 144 & 14,254 & $14,400^\dagger$ & 113.74 & {--} & {--} & 1,842 & 4,813 & 6,699 & 160.67 & 162.25 & 0.97 & 1,144 & 4,760 & 5,956 & 160.67 & 162.27 & 0.99 \\
\bottomrule
\end{tabular}
}
\vspace{2pt}
\begin{minipage}{\linewidth}
\noindent{\footnotesize $^\dagger$Time limit of 14{,}400 seconds (4 hours) reached. Dashes (--) indicate that no bound was available at termination.}
\end{minipage}
\end{table}

\vspace{-0.2cm}
Table~\ref{tab:benders-milp-results} shows that, apart from the 24-hour instance, neither the \textit{Extensive Form} nor \textit{Classical Benders} reaches the 1\% optimality gap within the time limit. The \textit{Extensive Form} solves only the 24-hour instance within this gap and returns no lower bound at the 4-hour and 2-hour resolutions. \textit{Classical Benders} likewise solves only the 24-hour instance, where its solution time of 3{,}821 seconds exceeds even that of the \textit{Extensive Form}. At the finer resolutions, it reaches the time limit with gaps of 4.68\% or worse. Both variants of \textit{Algorithm~\ref{alg:benders-decomposition}}, by contrast, solve every instance within the 1\% optimality gap. On the 24-hour instance---the only one solved by either the \textit{Extensive Form} or \textit{Classical Benders}---they are roughly an order of magnitude faster than the \textit{Extensive Form} and more than thirty times faster than \textit{Classical Benders}.

Consistent with the LP-relaxation results, comparing \textit{Algorithm~\ref{alg:benders-decomposition}} against \textit{Algorithm~\ref{alg:benders-decomposition} without VIs} shows that the valid inequalities reduce the total solution time at the three finer resolutions. 
On the 2-hour instance, identifying the most violated valid inequalities and separating them with Algorithm~\ref{alg:separationkadane} takes 6.2 seconds over all iterations. At this finest resolution, \textit{Algorithm~\ref{alg:benders-decomposition}} terminates in 1{,}330 iterations, whereas \textit{Algorithm~\ref{alg:benders-decomposition} without VIs} requires 1{,}460. Given this negligible separation cost, the valid inequalities accelerate convergence and meaningfully reduce the overall solution time, confirming their contribution to the performance of the solution procedure. We therefore continue with \textit{Algorithm~\ref{alg:benders-decomposition}} in the remainder of the experiments.

When \textit{Algorithm~\ref{alg:benders-decomposition}} is applied to the 2-hour instance, the LP that corrects the non-anticipativity violations of the operational decision variables (line~\ref{line:non-anticipativity-correction}) takes 4.7 seconds to solve. This correction step does not increase the objective value in any instance solved by the non-anticipativity-relaxed methods, for either the LP or the MILP model. Although Proposition~\ref{prop:na-feasible} only provides an upper bound on this increment, in our experiments it is always zero. Overall, the proposed relaxation--correction strategy allows us to solve the original problem efficiently: relaxing non-anticipativity yields the decomposition that underlies the runtimes in Table~\ref{tab:benders-milp-results}, while restoring it afterward incurs only the negligible cost of the correction step.

\subsection{Case Study}
\label{subsec:case-study}

We now present the clean electricity--heat transition plan for the METU campus. We first describe the base-case investment plan, then evaluate the out-of-sample performance of the nominal and robust plans, and finally report sensitivity analyses on key problem parameters.

\subsubsection{Base Case}
\label{subsubsec:base-case}

Figure~\ref{fig:base-model-tree} presents the optimal investment plan obtained with \textit{Algorithm~\ref{alg:benders-decomposition}} on the instance with 2-hour operational resolution described in Section~\ref{subsec:experimental-setup}. Each node of the scenario tree reports the capacity installed for each technology in every year, and the root node holds the first-stage decisions that are common to all scenario paths. Each branch label indicates the pace of technological progress, $s$ for slow and $f$ for fast, with the first letter referring to photovoltaic panels and the second to lithium-ion batteries. For instance, scenario path $S_3$, denoted by $ss \smalltimes fs$, corresponds to a trajectory in which photovoltaic panels undergo slow progress followed by fast progress, while lithium-ion batteries undergo two consecutive stages of slow advancement.

The expected cost of the transition plan is \$162.27M, of which \$11.45M corresponds to electricity purchasing costs and \$23.64M to heat purchasing costs. The remaining \$127.18M consists of technology installation and O\&M costs. In the first stage, the plan installs 12.0~\si{\mega\watt} of solar, 12.0~\si{\mega\watt} of wind, 1.5~\si{\mega\watt\hour} of battery storage, and 4.4~\si{\mega\watt\hour} of heat-pump capacity, all in the first year. Renewable generation and storage capacities are then expanded substantially in the second and third stages, driven both by the progress in technology costs and efficiencies and by the net-zero emission target enforced in the final period. Across the planning horizon, photovoltaic panels and lithium-ion batteries account for most of the installed capacity, whereas wind capacity is expanded only sporadically. Photovoltaic panels are preferred over wind turbines because of the limited wind potential at the installation site, where wind turbines achieve an annual capacity factor of only 10.4\%, compared with 18.3\% for photovoltaic panels. Battery storage grows sharply toward the end of the horizon as renewable penetration increases, with annual installations of up to 539.2~\si{\mega\watt\hour}. Once grid procurement is prohibited in the final period, substantial storage is needed to balance the intermittent output of renewable generation with demand.

Heat demand, on the other hand, is met mostly through electrified heating. Heat-pump installations appear in every stage and increase markedly toward the end of the planning horizon across all scenario paths. Parabolic troughs and hot water tanks, by contrast, are deployed only in small quantities and at a limited number of nodes, indicating that thermal energy generation and thermal storage play a marginal role relative to heat pumps powered by clean electricity. This observation is consistent with parabolic troughs requiring further cost reductions to become economically competitive for widespread deployment~\citep{KHAN2024114551}.

\definecolor{solaryellow}{RGB}{255,240,181}
\definecolor{windcyan}{RGB}{209,237,242}
\definecolor{batterypurple}{RGB}{226,218,245}
\definecolor{parabolictroughbrown}{RGB}{255,224,200}
\definecolor{heatpumporange}{RGB}{230,234,219}
\definecolor{heatstoragepink}{RGB}{255,214,214}

\begin{figure}[H]
\centering
\begin{adjustbox}{max width=\linewidth, max height=0.93\textheight, keepaspectratio}

};
\draw ([xshift=-0.25cm]1.south) -- ([xshift=0.14cm]2.north) node [midway, left, shift={(-.25,0)}, edge label] {\fontsize{7.2}{8.2}\selectfont \textit{ss}};
\draw ([xshift=0.05cm]1.south) -- (3.north) node [midway, left, shift={(-.05,0)}, edge label] {\fontsize{7.2}{8.2}\selectfont \textit{sf}};
\draw ([xshift=0.35cm]1.south) -- (4.north) node [midway, right, shift={(.05,0)}, edge label] {\fontsize{7.2}{8.2}\selectfont \textit{fs}};
\draw ([xshift=0.65cm]1.south) -- (5.north) node [midway, right, shift={(.25,0)}, edge label] {\fontsize{7.2}{8.2}\selectfont \textit{ff}};
\node[above=1.0pt of 1, xshift=0.3cm] {\fontsize{6.4}{7.4}\selectfont N: 1};
\node[above=1.0pt of 2, xshift=0.2cm] {\fontsize{6.4}{7.4}\selectfont N: 2};
\node[above=1.0pt of 3] {\fontsize{6.4}{7.4}\selectfont N: 3};
\node[above=1.0pt of 4] {\fontsize{6.4}{7.4}\selectfont N: 4};
\node[above=1.0pt of 5] {\fontsize{6.4}{7.4}\selectfont N: 5};
\node[above=1.0pt of 6] {\fontsize{6.4}{7.4}\selectfont N: 6};
\node[above=1.0pt of 7] {\fontsize{6.4}{7.4}\selectfont N: 7};
\node[above=1.0pt of 8] {\fontsize{6.4}{7.4}\selectfont N: 8};
\node[above=1.0pt of 9] {\fontsize{6.4}{7.4}\selectfont N: 9};
\node[above=1.0pt of 10] {\fontsize{6.4}{7.4}\selectfont N: 10};
\node[above=1.0pt of 11] {\fontsize{6.4}{7.4}\selectfont N: 11};
\node[above=1.0pt of 12] {\fontsize{6.4}{7.4}\selectfont N: 12};
\node[above=1.0pt of 13] {\fontsize{6.4}{7.4}\selectfont N: 13};
\node[above=1.0pt of 14] {\fontsize{6.4}{7.4}\selectfont N: 14};
\node[above=1.0pt of 15] {\fontsize{6.4}{7.4}\selectfont N: 15};
\node[above=1.0pt of 16] {\fontsize{6.4}{7.4}\selectfont N: 16};
\node[above=1.0pt of 17] {\fontsize{6.4}{7.4}\selectfont N: 17};
\node[above=1.0pt of 18] {\fontsize{6.4}{7.4}\selectfont N: 18};
\node[above=1.0pt of 19] {\fontsize{6.4}{7.4}\selectfont N: 19};
\node[above=1.0pt of 20] {\fontsize{6.4}{7.4}\selectfont N: 20};
\node[above=1.0pt of 21] {\fontsize{6.4}{7.4}\selectfont N: 21};
\node[draw, draw=black!45, line width=0.4pt, rounded corners=3pt, inner xsep=6pt, inner ysep=8.5pt, fit=(6)(7)(8)(9), yshift=7.2pt] (group2) {};
\draw[draw=black!45, line width=0.4pt] (2.south) -- (2.south |- group2.north);
\node[draw, draw=black!45, line width=0.4pt, rounded corners=3pt, inner xsep=6pt, inner ysep=8.5pt, fit=(10)(11)(12)(13), yshift=7.2pt] (group3) {};
\draw[draw=black!45, line width=0.4pt] (3.south) -- (3.south |- group3.north);
\node[draw, draw=black!45, line width=0.4pt, rounded corners=3pt, inner xsep=6pt, inner ysep=8.5pt, fit=(14)(15)(16)(17), yshift=7.2pt] (group4) {};
\draw[draw=black!45, line width=0.4pt] (4.south) -- (4.south |- group4.north);
\node[draw, draw=black!45, line width=0.4pt, rounded corners=3pt, inner xsep=6pt, inner ysep=8.5pt, fit=(18)(19)(20)(21), yshift=7.2pt] (group5) {};
\draw[draw=black!45, line width=0.4pt] (5.south) -- (5.south |- group5.north);
\draw[-, decorate, decoration={brace, amplitude=5pt}, thick, draw=black!55] (-8.6, -1.79) -- (-8.6, 0.75) node[midway, left=6pt, font=\scriptsize, rotate=90, anchor=south] {Stage 1};
\draw[-, decorate, decoration={brace, amplitude=5pt}, thick, draw=black!55] (-8.6, -4.78) -- (-8.6, -2.6) node[midway, left=6pt, font=\scriptsize, rotate=90, anchor=south] {Stage 2};
\draw[-, decorate, decoration={brace, amplitude=5pt}, thick, draw=black!55] (-8.6, -16.82) -- (-8.6, -5.2) node[midway, left=6pt, font=\scriptsize, rotate=90, anchor=south] {Stage 3};
\end{tikzpicture}
\end{adjustbox}
\vspace{-0.03\textwidth}
\caption{Decision tree for the base model.}
\label{fig:base-model-tree}
\end{figure}

\vspace{-0.2cm}

The investment plans adapt to the realized technological advancements, differing across scenario paths in both the timing and the mix of installed technologies. The differences are clearest in solar deployment. Node~4, where solar advances quickly but lithium-ion batteries do not, installs 156~\si{\mega\watt} of solar already in the second stage, compared with 108~\si{\mega\watt} at node~2, where both technologies advance slowly; the four scenario paths descending from node~4 then expand photovoltaic capacity further in the final stage, reaching the largest cumulative solar capacity in the tree, 402~\si{\mega\watt} on path~$S_{11}$. Conversely, the slower branches invest less in solar: paths $S_1$, $S_2$, and $S_4$, the children of node~2, add no photovoltaic capacity beyond the second stage and instead rely on storage, whose build-up is concentrated in the final stage and thereby shifts capacity toward the later years. The technology mix shifts as well; for example, wind turbine capacities are larger and installed earlier along trajectories with slower photovoltaic and lithium-ion progress.

These differences are also evident at the node level. Among the children of node~2 (nodes 6, 7, 8, and 9), only one node installs electricity generation technologies, whereas all other leaf nodes do. Budget utilization varies in a similar way: the budget constraints are active in every year of nodes 2, 6, 10, 14, and 18, a pattern most pronounced along the scenario paths in which technology development proceeds slowly during the transition from the second to the third stage. Along the first scenario path ($S_1$), for instance, the budget constraints remain active throughout the final decade, whereas no technology is installed in several years at nodes 3 and 5. Even leaf nodes branching from the same parent can exhibit considerably different technology mixes: given slow lithium-ion progress from the second to the third stage, photovoltaic installation capacities are 6 to 252~\si{\mega\watt} higher when the solar advancement in the second stage is fast. Together, these adaptations illustrate the sensitivity of the investment plan to the technological advancement trajectories and demonstrate the value of the multi-stage stochastic formulation, which tailors the timing and the technology mix to the evolving costs and efficiencies.

\subsubsection{Out-of-Sample Performance Evaluation}
\label{subsubsec:oos-performance}

Table~\ref{tab:monte-carlo-results} summarizes the out-of-sample performance of the nominal and robust investment plans, obtained by solving model~\eqref{eq:stochasticModel} at investment robustness levels $\epsilon_{i} \in \{0, 0.1, 0.2\}$. Each plan is evaluated on a broader range of randomly generated operational data through the rolling-horizon Monte Carlo simulations described in Section~\ref{subsec:rolling-horizon-sim}. The operational deviations follow either a white-noise or a pink-noise process, and each plan is dispatched at robustness levels $\epsilon_{d} \in \{0, 1, 2\}$ applied to the lookahead sub-periods. In both the robust reformulation and the robust dispatch problems, the box uncertainty intervals are defined in terms of standard deviations: the deviation $\hat{v}_{d,i}$ that sets the half-width of each interval equals the standard deviation $s_d s_{c,i}$ derived in Section~\ref{subsec:operational-uncertain-data}, so each uncertain parameter is displaced from its nominal value by $\epsilon$ standard deviations. The simulations use dispatch windows of $\hat{l} \smalleq 12$ sub-periods, of which the first $\bar{l} \in \{1, 3\}$ are treated as known.

Each configuration is evaluated over $100$ replications solved in parallel. A single replication comprises 458,640 linear programs and takes approximately 550 seconds. The table reports the cost and quantity of purchased electricity and heat over the planning horizon, together with the net-zero shortfalls, expressed as the percentage of final-period electricity and heat demand met by exogenous purchases. All reported quantities are averaged across scenario paths and replications.

Increasing the investment robustness level $\epsilon_{i}$ trades a higher planned cost for more reliable out-of-sample performance. Building the plan under more conservative operational profiles raises its expected cost from \$162.27M for the nominal model ($\epsilon_{i} \smalleq 0$) to \$167.22M and \$172.81M for the robust models ($\epsilon_{i} \smalleq 0.1$ and $\epsilon_{i} \smalleq 0.2$), increases of 3.0\% and 6.5\%, respectively. In return, the out-of-sample purchasing cost falls by roughly 8 to 9\% in every configuration. The net-zero shortfalls respond in the same way: raising $\epsilon_{i}$ lowers them in nearly all configurations, the only exception being the already negligible electricity shortfall under white noise with $\bar{l} \smalleq 3$, where the heat shortfall nonetheless continues to decline. These shortfalls remain small in absolute terms throughout, so the plans rarely miss the net-zero target by a wide margin. The gains are uneven across commodities, however: heat shortfalls remain consistently larger than electricity shortfalls and are harder to drive to zero. This mirrors the smaller proportional reduction in heat purchases and identifies heat as the binding commodity for the installed technology mix.

\begin{table}[H]
\centering
\footnotesize
\setlength{\tabcolsep}{4pt}
\renewcommand{\arraystretch}{1.0}
\caption{Out-of-sample Monte Carlo simulation results for the nominal and robust investment plans under white- and pink-noise deviations.}
\label{tab:monte-carlo-results}
\resizebox{\textwidth}{!}{%
\begin{tabular}{ccc ccccr @{\hskip 10pt} ccccc}
\toprule
& & & \multicolumn{5}{c}{$\boldsymbol{\bar{l} \smalleq 1}$} & \multicolumn{5}{c}{$\boldsymbol{\bar{l} \smalleq 3}$} \\
\cmidrule(lr){4-8} \cmidrule(lr){9-13}
\multirow{2}{*}{\makecell{Investment\\$\epsilon_{i}$}} & \multirow{2}{*}{Noise} & \multirow{2}{*}{\makecell{Dispatch\\$\epsilon_{d}$}}
    & \multirow{2}{*}{\makecell{Purchasing\\cost (\$M)}} & \multicolumn{2}{c}{Purchased (MWh)} & \multicolumn{2}{c}{Shortfall (\%)}
    & \multirow{2}{*}{\makecell{Purchasing\\cost (\$M)}} & \multicolumn{2}{c}{Purchased (MWh)} & \multicolumn{2}{c}{Shortfall (\%)} \\
\cmidrule(lr){5-6} \cmidrule(lr){7-8} \cmidrule(lr){10-11} \cmidrule(lr){12-13}
 & & & & \multicolumn{1}{c}{Electricity} & \multicolumn{1}{c}{Heat} & \multicolumn{1}{c}{Electricity} & \multicolumn{1}{c}{Heat}
    & & \multicolumn{1}{c}{Electricity} & \multicolumn{1}{c}{Heat} & \multicolumn{1}{c}{Electricity} & \multicolumn{1}{c}{Heat} \\
\midrule
\multirow{6}{*}{$0$}
 & \multirow{3}{*}{White} & $0$ & 35.03 & 88,948 & 746,917 & 0.152 & 0.398 & 34.58 & 87,448 & 736,730 & 0.034 & 0.072 \\
 & & $1$ & 34.76 & 86,775 & 745,658 & 0.006 & 0.064 & 34.56 & 86,759 & 738,620 & 0.002 & 0.039 \\
 & & $2$ & 35.08 & 86,497 & 757,951 & 0.000 & 0.078 & 34.70 & 86,556 & 744,526 & 0.000 & 0.046 \\
\cmidrule(lr){2-13}
 & \multirow{3}{*}{Pink} & $0$ & 35.78 & 93,550 & 757,007 & 1.309 & 2.363 & 35.00 & 89,784 & 742,430 & 0.251 & 0.839 \\
 & & $1$ & 35.16 & 88,868 & 751,813 & 0.202 & 0.877 & 34.88 & 88,420 & 743,380 & 0.049 & 0.613 \\
 & & $2$ & 35.38 & 88,083 & 762,360 & 0.037 & 0.637 & 35.01 & 88,113 & 749,050 & 0.027 & 0.592 \\
\midrule
\multirow{6}{*}{$0.1$}
 & \multirow{3}{*}{White} & $0$ & 33.08 & 78,120 & 728,475 & 0.131 & 0.281 & 32.55 & 76,166 & 717,145 & 0.037 & 0.049 \\
 & & $1$ & 32.76 & 75,525 & 727,145 & 0.004 & 0.033 & 32.51 & 75,447 & 718,704 & 0.002 & 0.017 \\
 & & $2$ & 33.10 & 75,259 & 740,205 & 0.000 & 0.035 & 32.65 & 75,293 & 724,433 & 0.000 & 0.019 \\
\cmidrule(lr){2-13}
 & \multirow{3}{*}{Pink} & $0$ & 33.87 & 83,172 & 737,764 & 1.088 & 1.967 & 33.01 & 78,928 & 722,733 & 0.242 & 0.648 \\
 & & $1$ & 33.19 & 77,966 & 732,764 & 0.153 & 0.641 & 32.87 & 77,435 & 723,175 & 0.039 & 0.426 \\
 & & $2$ & 33.43 & 77,154 & 744,097 & 0.025 & 0.433 & 32.99 & 77,157 & 728,608 & 0.019 & 0.405 \\
\midrule
\multirow{6}{*}{$0.2$}
 & \multirow{3}{*}{White} & $0$ & 31.93 & 73,040 & 709,055 & 0.121 & 0.244 & 31.43 & 71,443 & 697,862 & 0.038 & 0.047 \\
 & & $1$ & 31.65 & 70,664 & 708,453 & 0.004 & 0.022 & 31.39 & 70,667 & 699,557 & 0.002 & 0.010 \\
 & & $2$ & 32.04 & 70,314 & 723,484 & 0.000 & 0.017 & 31.55 & 70,395 & 706,128 & 0.000 & 0.010 \\
\cmidrule(lr){2-13}
 & \multirow{3}{*}{Pink} & $0$ & 32.73 & 77,890 & 719,404 & 1.034 & 1.767 & 31.91 & 74,125 & 704,234 & 0.237 & 0.563 \\
 & & $1$ & 32.10 & 73,037 & 714,874 & 0.143 & 0.544 & 31.77 & 72,651 & 704,730 & 0.034 & 0.349 \\
 & & $2$ & 32.39 & 72,200 & 728,013 & 0.021 & 0.348 & 31.90 & 72,256 & 710,971 & 0.015 & 0.325 \\
\bottomrule
\end{tabular}%
}
\end{table}

\vspace{-0.2cm}

Beyond the choice of investment plan, the temporal structure of the operational deviations has a pronounced effect on out-of-sample performance. Under white noise, the deviations are temporally independent and largely offset one another across sub-periods whereas under pink noise, unfavorable realizations, such as below-nominal renewable generation and above-nominal demand, can persist over consecutive sub-periods, leaving the installed technology mix unable to fully meet demand. Pink noise therefore raises purchased electricity and heat in every configuration. 
Two levers mitigate this effect: Extending the known portion of the window from $\bar{l} \smalleq 1$ to $\bar{l} \smalleq 3$ lets each dispatch problem anticipate its upcoming sub-periods. When no dispatch robustness is applied, this more than halves the shortfalls in every configuration and also lowers the purchased quantities, especially under pink noise. Dispatch-side robustness provides a complementary hedge that leaves the investment plan unchanged, although its effect on both the net-zero shortfalls and the purchasing cost is non-monotone. The first increment ($\epsilon_{d} \smalleq 1$) reduces both the shortfalls and the purchasing cost in every configuration, whereas the second ($\epsilon_{d} \smalleq 2$) dispatches so conservatively that it raises heat purchases enough to increase the overall purchasing cost. This reversal arises because the second increment doubles the width of the dispatch problem's uncertainty intervals, bracing each lookahead sub-period against more pessimistic combination of low renewable generation and a low heat-pump coefficient of performance. Anticipating these harsh conditions, the dispatch draws down its stored electricity in the current sub-periods rather than holding it in reserve, leaving less available for the periods that follow. When these conservative expectations go unrealized, the depleted storage can no longer drive the heat pump, so the unmet heat demand is covered by exogenous purchases, which raises the overall purchasing cost.

\subsubsection{Sensitivity Analyses}
\label{subsubsec:sensitivity-analyses}

We analyze the effect of three key problem parameters on the transition plan: the final-period emission target, the investment budget, and the energy demand.

\textit{The Effect of Emission Target:}
The first analysis concerns the sensitivity of the expected cost of the transition plan to the final-period emission target. The base-case instance is re-solved with $\gamma_{r} \in \{0.01, 0.02, 0.03, 0.04, 0.05\}$ for all $r \in \mathcal{R}$, allowing the corresponding fraction of final-period demand in each energy category to be met by exogenous purchases. The validity of the inequalities~\eqref{eq:validineq10} relies on the elimination of the exogenous procurement decision variables, which is possible only when $\gamma_{r} \smalleq 0$. These instances are therefore solved using \textit{Algorithm~\ref{alg:benders-decomposition} without VIs}.

Table~\ref{tab:emission-target-sensitivity} reports the expected cost across the range of exogenous procurement allowances. The results reveal a clear trade-off between the stringency of the emission target and the cost of the transition plan: permitting exogenous purchases for just 1\% of final-period demand lowers the expected cost from the base-case \$162.27M to \$125.67M, while each additional percentage point yields a progressively smaller reduction, with the cost reaching \$108.80M at the 5\% allowance. The 
large saving from the first percentage point indicates that insisting on an exact net-zero target 
carries a substantial cost premium. The allowances also reshape the technology mix, with the initial allowances substantially reducing the installed storage capacity and further increments primarily lowering the installed capacities of generation technologies and heat pumps. This occurs since purchases meet a larger share of demand in sub-periods with unfavorable renewable performance, lowering the capacity that would otherwise be required to serve it.

\begin{table}[H]
\centering
\footnotesize
\renewcommand{\arraystretch}{1.05}
\caption{Expected costs under different final-period exogenous procurement allowances.}
\label{tab:emission-target-sensitivity}
\begin{tabular}{lcccccc}
\toprule
$\gamma_{r}$, $r \in \mathcal{R}$ & $0$ & $0.01$ & $0.02$ & $0.03$ & $0.04$ & $0.05$ \\
\midrule
Expected cost (\$M) & 162.27 & 125.67 & 118.61 & 114.51 & 111.36 & 108.80 \\
\bottomrule
\end{tabular}
\end{table}


\vspace{-0.2cm}
\textit{The Effect of Investment Budget:}
The annual budget, fixed at \$20M in every period in the base case, is varied next. Raising it to \$25M reduces the expected cost from the base case to \$153.38M, and a further increase to \$30M lowers it to \$149.07M. The additional reduction becomes smaller as the budget grows. These diminishing marginal savings reflect the budget acting as a binding constraint: as noted for the base-case decision tree (Figure~\ref{fig:base-model-tree}), the budget caps are active in many years, particularly along the scenario paths with slow technology development. Relaxing the annual budget loosens these caps, so that capacity can be installed closer to when it is needed and at lower cost, rather than earlier to remain within the annual limit.

\textit{The Effect of Energy Demand:}
Although no clear driver of demand growth is anticipated for the METU campus, the robustness of the transition plan to such growth is nonetheless examined by increasing the nominal electricity and heat demand at compounding annual rates of 0.5\% and 1\% over the planning horizon. The expected cost rises from the base case to \$180.32M under 0.5\% growth and to \$202.89M under 1\% growth, with the increase accelerating at the higher growth rate. Because the demand increments compound over the horizon and exogenous purchases are prohibited in the final period under the net-zero target, the additional final-period demand must be met entirely by the installed capacity. The plan therefore installs considerably more renewable generation and storage capacity. Since the budget binds during the final decade along some scenario paths, part of this additional capacity is installed in the early years, when technologies are more expensive, which adds further to the expected cost.

\section{Conclusion}
\label{sec:conclusion-main-findings}

This paper studies clean electricity--heat transition planning for a campus-scale integrated energy system, where long-term investment decisions must adapt to uncertain technological progress while remaining compatible with high-resolution operations. We represent this setting with a multi-stage stochastic mixed-integer program and develop a robust reformulation that captures operational uncertainty through box uncertainty sets. To solve the resulting large-scale instances, we propose a Benders decomposition algorithm with partial non-anticipativity relaxation: operational non-anticipativity is relaxed during the decomposition and restored only at termination. The algorithm is further strengthened with 
valid inequalities and a two-phase cut-addition strategy.

The METU campus case study shows that the proposed algorithm substantially improves computational efficiency. It solves every instance, across all temporal resolutions, to a 1\% optimality gap within the time limit, whereas both the extensive form and classical Benders decomposition solve only the coarsest 24-hour instance. On that instance, the proposed algorithm is roughly an order of magnitude faster than the extensive form and more than thirty times faster than classical Benders. The resulting net-zero transition plan has an expected cost of \$162.27M and relies primarily on photovoltaic panels, lithium-ion batteries, and heat pumps.

The rolling-horizon Monte Carlo simulations and sensitivity analyses provide several planning insights. Increasing the investment robustness level raises the planned transition cost, but it reduces out-of-sample purchasing costs and net-zero shortfalls. Temporally correlated operational deviations are more challenging than independent deviations, especially for heat supply, while dispatch-side robustness can mitigate these effects when its conservatism is chosen carefully. The final-period emission target is the dominant cost driver: enforcing an exact net-zero target, rather than allowing even a marginal final-period shortfall, carries a substantial cost premium.

Beyond the campus application, the methodology applies to multi-stage stochastic mixed-integer programs with a similar master--subproblem structure. In such problems, the master problem retains variables whose non-anticipativity is difficult to restore, while the decomposition assigns the remaining variables to scenario-wise subproblems and corrects their non-anticipativity after termination. The correction step must recover a feasible non-anticipative solution while limiting any change in the objective value. In the present model, the correction LP satisfies this requirement and restores non-anticipativity without increasing the objective value across the computational instances. A natural next step is to scale the framework to larger multi-energy networks, supporting transition planning beyond the campus setting.

\subsection*{Acknowledgments}
This work was supported by the  Scientific and Technological Research Council of Turkey [grant number 222M243].

{\begingroup
\setstretch{1}
\footnotesize
\bibliography{thermalbib}
\endgroup
}

\appendix

\section{Data}
\label{sec:appendixA}

Table~\ref{tab:VersionsTable} presents the annual energy yields of the candidate renewable technologies, computed from hourly generation profiles simulated with the National Renewable Energy Laboratory’s System Advisor Model (SAM)~\citep{nrelsam}. For each technology, SAM uses the geographic location of the METU campus and the technical specifications listed in Table~\ref{tab:VersionsTable} to simulate hourly energy output over a representative meteorological year. The simulated generation profiles, demand series, and detailed input data used to construct the model are publicly available in the source code repository~\citep{stochastic-campus-energy-transition}.

\begin{table}[H]
\centering
\caption{Candidate renewable technologies (costs are in 2024~\$).}
\label{tab:VersionsTable}
\begin{adjustbox}{max width=\textwidth}
\renewcommand{\arraystretch}{1.3}
\begin{tabular}{|c|c|c|c|c|c|c|c|c|c|c|}
\hline
\makecell[c]{\textbf{Technology}} &
\makecell[c]{\textbf{Capacity}} &
\makecell[c]{\textbf{Installation}\\\textbf{Cost}} &
\makecell[c]{\textbf{Annual}\\\textbf{O\&M}\\\textbf{Cost}$^{\dagger}$} &
\makecell[c]{\textbf{Footprint}} &
\makecell[c]{\textbf{Annual}\\\textbf{Energy}\\\textbf{Yield}} &
\makecell[c]{\textbf{Economic}\\\textbf{Lifetime}} &
\makecell[c]{\textbf{Annual}\\\textbf{Degradation}\\\textbf{Rate}} &
\makecell[c]{\textbf{Round-trip}\\\textbf{Efficiency}} &
\makecell[c]{\textbf{Self-discharge}\\\textbf{Rate}} &
\makecell[c]{\textbf{Reference}} \\
\hline
Solar PV$_{1}$ & \SI{6}{\mega\watt}  & \$3{,}190{,}170 & \multirow{2}{*}{\makecell[c]{8.35~\$/\si{\kilo\watt} --\\0.918}} & 28{,}370 \si{\meter\squared} & \SI{9.63}{\giga\watt\hour} & \multirow{2}{*}{25 years} & \multirow{2}{*}{0.5\%} & \multirow{2}{*}{-} & \multirow{2}{*}{-} & \multirow{2}{*}{\citet{csener2025dynamic}} \\
\cline{1-3} \cline{5-6}
Solar PV$_{2}$ & \SI{12}{\mega\watt} & \$5{,}832{,}190 & & 56{,}739 \si{\meter\squared}  & \SI{19.26}{\giga\watt\hour} & & & & & \\
\hline
\makecell[c]{Wind\\Turbine$_{1}$} & \SI{6}{\mega\watt} & \$5{,}430{,}435 & \makecell[c]{13.74~\$/\si{\kilo\watt} --\\0.898} & 86{,}409 \si{\meter\squared} & \SI{5.47}{\giga\watt\hour} & 25 years & 1.6\% & - & - & \citet{csener2025dynamic} \\
\hline
\makecell[c]{Parabolic\\Trough$_{1}$} & \SI{4.02}{\mega\watt} & \$2{,}812{,}480 & 13.99~\$/\si{\kilo\watt} & 5{,}280 \si{\meter\squared} & \SI{4.15}{\giga\watt\hour} & 25 years & 0.2\% & - & - & \citet{nrelsam} \\
\hline
Heat Pump$_{1}$ & \SI{246}{\kilo\watt\hour} & \$150{,}563 & 9.18~\$/\si{\kilo\watt\hour} & \SI[locale=US,group-digits=false]{12}{\meter\squared} & - & 20 years & 2.0\% & - & - & \citet{Mavromatidis2021} \\
\hline
\makecell[c]{Lithium-ion\\Battery$_{1}$}  & \SI{1}{\mega\watt\hour} & \$305{,}556 & 5.50~\$/\si{\kilo\watt\hour} & \SI[locale=US,group-digits=false]{30.0}{\meter\squared} & - & 20 years & 1.0\% & 90.0\% & 0.06\%/\si{\hour} & \citet{csener2025dynamic} \\
\hline
\makecell[c]{Hot Water\\Storage$_{1}$} & \SI{1}{\mega\watt\hour} & \$15{,}000 & 0.30~\$/\si{\kilo\watt\hour} & \SI[locale=US,group-digits=false]{8.7}{\meter\squared} & - & 30 years & - & 90.0\% & 0.50\%/\si{\hour} & \citet{Mavromatidis2021} \\
\hline
\end{tabular}
\end{adjustbox}
\vspace{0.15cm}
\begin{minipage}{\linewidth}
{\footnotesize\setlength{\baselineskip}{0.8\baselineskip}\noindent $^{\dagger}$O\&M cost shown as: initial annual cost (\$/kW) -- annual reduction multiplier \par}
\end{minipage}
\end{table}

\begin{table}[H]
\centering
\caption{Five-year technology advancement multipliers.}
\label{tab:cluster_summary}
\footnotesize
\begin{adjustbox}{max width=\textwidth}
\begin{tabular}{l c c c c}
\toprule
\textbf{Technology} & \textbf{Advancement Probability} & \textbf{Cost Multiplier} & \textbf{Efficiency Multiplier} & \textbf{Reference} \\
\midrule
\multirow{2}{*}{Solar photovoltaic}  & Fast (0.667) & 0.551 & 1.133 & \multirow{2}{*}{\citet{csener2025dynamic}} \\
& Slow (0.333) & 0.856 & 1.056 & \\
\addlinespace
\multirow{2}{*}{Lithium-ion battery} & Fast (0.464) & 0.348 & 1.000 & \multirow{2}{*}{\citet{csener2025dynamic}} \\
& Slow (0.536) & 0.634 & 1.000 & \\
\addlinespace
Wind turbine & Medium (1.000) & 0.815 & 1.000 & \citet{csener2025dynamic} \\
Parabolic trough & Medium (1.000) & 0.683 & 1.000 & \citet{irena2025rpgc} \\
Heat pump & Medium (1.000) & 0.874 & $1.156^\dagger$ & \citet{Mavromatidis2021} \\
Thermal storage & Medium (1.000) & 1.000 & 1.000 & \citet{Mavromatidis2021} \\
\bottomrule
\end{tabular}
\end{adjustbox}
\vspace{0.15cm}
\begin{minipage}{\linewidth}
{\footnotesize\setlength{\baselineskip}{0.8\baselineskip}\noindent $^{\dagger}$The advancement multipliers are applied multiplicatively across successive five-year periods, whereas the heat pump efficiency multiplier is applied additively. \par}
\end{minipage}
\end{table}

\end{document}